\documentclass[11pt]{article}

\usepackage[margin=1in]{geometry}
\usepackage{amsmath,amssymb,amsthm,mathtools}
\usepackage[T1]{fontenc}
\usepackage[utf8]{inputenc}
\usepackage{lmodern}
\usepackage{enumitem}
\usepackage{hyperref}
\usepackage{xcolor}
\usepackage{graphicx}
\usepackage{booktabs}
\usepackage{float}

\newtheorem{lemma}{Lemma}[section]
\newtheorem{proposition}[lemma]{Proposition}
\newtheorem{theorem}[lemma]{Theorem}
\theoremstyle{definition}
\newtheorem{definition}[lemma]{Definition}
\newtheorem{remark}[lemma]{Remark}

\newcommand{\RR}{\mathbb{R}}
\newcommand{\EE}{\mathbb{E}}
\newcommand{\FF}{\mathbb{F}}
\newcommand{\cA}{\mathcal{A}}

\newcommand{\cF}{\mathcal{F}}

\newcommand{\TV}{\operatorname{TV}}
\newcommand{\dd}{\,d}
\newcommand{\ellbar}{\underline{\ell}}

\title{A Singular Control Problem for Data Center Electricity Cost Minimization}
\author{Ren\'e Carmona and Xiaoyu Cui}

\begin{document}
\maketitle

\begin{abstract}
The goal of the paper is to provide a complete analysis of a stochastic control problem when the cost to minimize involves the running maximum of the underlying controlled state process. The model is motivated by the electricity cost minimization
of a data center facing coincident peak charges and possibly benefiting from participation in a demand-response
program. From a mathematical standpoint, the main challenge comes from the inclusion of the running maximum in the state
of the system forcing the original problem into a singular stochastic control problem on a closed wedge in the plane
with reflection on parts of the boundary.
We prove that the value function is locally Lipschitz continuous in space using a
gap-monotonicity estimate for coupled reflected loads and a deterministic
total-variation bound for the difference of two running maxima.
\vskip 0pt
The dynamic programming principle and viscosity formulation are then
derived for ordinary progressively measurable controls. The corresponding HJB equation includes a reflecting Neumann condition on the lower boundary for the load direction as well as an oblique boundary condition on the diagonal part of the boundary of the domain due to the effect of the running-maximum process. We prove existence of a solution in the viscosity sense as well as a comparison theorem providing  conditional uniqueness in the corresponding viscosity class.
\vskip 0pt
Finally we provide numerical results illustrating the trade-off between the two conflicting incentives comprising the expected cost of the data center.
\end{abstract}


\section{Introduction}

Data centers have become one of the fastest-growing sources of electricity demand in the world, and their load is heavily concentrated in a small number of regional grids. The International Energy Agency estimates that global electricity consumption by data centers is on track to roughly double between 2024 and 2030, with artificial-intelligence workloads accounting for most of the increase \cite{IEA2026}; independent studies point in the same direction \cite{demand_boom}. Because this new demand is spatially concentrated rather than spread uniformly across the grid, even a moderate aggregate increase can translate into acute local stress on transmission and distribution capacity, and into material costs for the system operators and utilities that must accommodate it \cite{Microsoft_review}. This has led system operators to lean increasingly on capacity-based charges, such as coincident-peak (CP) and non-coincident-peak (NCP) demand charges, and on demand-response (DR) programs, to shape the load of large flexible consumers such as data centers \cite{DataCenterDR1,DataCenterDR2}. The rapid buildout of data center capacity has also drawn scrutiny for reasons that go beyond the electricity bill itself, including its water consumption for cooling and the noise generated by cooling equipment and backup generation, both of which have become recurring sources of concern for regulators and host communities \cite{DataCenterNoise}. Providing economists, utilities, and policymakers with a rigorous, tractable model of how a single data center actually responds to these price signals and incentive programs is therefore of interest well beyond the mathematical questions the model raises, and it is the practical motivation for the present paper.

\vskip 2pt
With this motivation in mind, we study a stylized, but not unrealistic, model of the electricity-consumption decisions of a single data center. Being stylized, the model deliberately leaves out many features of an actual utility bill, such as the baseline cost of serving the existing load and the data center's own revenue streams; being grounded in the practice of grid operations, it keeps the two features which actually drive the tension in a data center's response to its environment, namely capacity-style demand charges of the CP/NCP type and DR incentive payments, both of which are real mechanisms routinely used by Independent System Operators (ISOs) and utilities. Formally, this note studies a single-agent continuous-time stochastic control problem over a one-dimensional controlled diffusion process with a lower reflecting boundary. We denote this process $(L_t)_{0\le t\le T}$. It is intended to represent the electricity demand (load) at time $t$ of a data center trying to minimize the cost of its electricity consumption over the period $[0,T]$.

A major mathematical difficulty comes from the fact that on top of the price of the power actually consumed, the data center faces coincident peak charges which can be offset or compounded by its participation in a demand response program. The coincident peak charges are only known long after $T$, so they are not "non-anticipative" like the typical inputs of a standard stochastic control problem. For modeling purposes, we include their impact via an integral with respect to the running maximum $M_s$ of the load $L_t$ for $t\le s$. While not a perfectly realistic model, this proxy has proven to be extremely useful in practice. See for example \cite{Carmona_CP} and \cite{Carmona_battery_opt}. In order to keep the problem in the realm of Markovian stochastic control problems, we include the running maximum in the state of the system.
So for this reason, the underlying state of our optimization problem
is $(L_s,M_s)$, where $L_s$ is the load process and $M_s=\sup_{u\le s}L_u$ its running maximum. Visualizing the time evolution of the pair $(L_s,M_s)$ in the $(\ell,m)$-plane is enlightening: the trajectories move horizontally on a line on which $m$ is constant as long as $\ell<m$, and there is a motion in the $m$-direction only along the diagonal $\ell=m$.

\vskip 2pt
This dependence of the cost on the running maximum of the state, together with the competition it creates against the demand-response reward, is the main source of originality of the model, and it is also what makes the problem mathematically demanding: publications on stochastic control models involving the running maximum of the state are few and far between. \cite{RMaxControl0} is the earliest publication we found tackling the optimal control of a diffusion process with a running cost depending on the running maximum of the diffusion, and using the pair $(L_s,M_s)$ as the new state for the analysis, arguing that while $M_t$ alone is not a Markov process, the pair $(L_s,M_s)$ is a strong Markov process when the optimization is restricted to closed loop Markovian feedback controls. While identifying the oblique boundary condition for the HJB equation, only particular cases are solved. In \cite{RMaxControl1} and \cite{RMaxControl2} the infinite horizon discounted cost minimization problem is addressed and a classical solution of the HJB equation is provided. None of these references combine the running-maximum penalty with a lower reflecting constraint on the state, a finite time horizon, and a demand-response reward, which is exactly the combination of features created by our data-center application, and which had, to the best of our knowledge, not yet been studied systematically.

\vskip 4pt
For physical reasons, we impose a lower operational bound \(\underline\ell\) on the load and, for definiteness, assume that the load is reflected when it reaches this lower boundary. The present formulation does not include an upper reflecting boundary; adding one would introduce an additional boundary condition in the HJB system.

The thrust of the present paper is to identify the dynamic programming and viscosity structure of this running-maximum control problem, and to do so rigorously. The coincident-peak term is written in the Stieltjes form \(\kappa\int\varphi'(L_s)\,dM_s\), but, because \(M\) increases only when \(L=M\), it is equivalent to the terminal peak penalty \(\kappa(\varphi(M_T)-\varphi(m))\). This avoids the representative ambiguity which would arise if one tried to integrate a usual \(dt\)-control against the singular measure \(dM\). Building on this reduction, our mathematical contributions are: (i) a rigorous derivation of the dynamic programming principle for the augmented state \((L,M)\); (ii) the viscosity formulation of the corresponding Hamilton-Jacobi-Bellman (HJB) equation, which includes a standard Neumann condition at the lower reflecting boundary together with a non-standard, oblique diagonal condition \(v_m+\kappa\varphi'(m)=0\) on the running-maximum boundary, reflecting the singular nature of the peak charge; and (iii) a comparison principle, proved after transforming the wedge problem into an oblique derivative problem on the orthant, which yields existence and conditional uniqueness of the value function in the corresponding viscosity class. This last step is carried out in the same spirit as in the paper \cite{RMaxControl4}, where a running-maximum cost is studied for a diffusion on the entire line, and \cite{RMaxControl5}, where the infinite horizon analog is considered; the present formulation differs by including a finite horizon, a lower reflecting load constraint, the data-center running cost, and the demand-response term. Finally, we complement the theoretical analysis with detailed numerical experiments, reported in Section~\ref{sec:numerical-experiments-draft} below, which illustrate the trade-off between the coincident-peak penalty and the demand-response reward and the resulting shape of the optimal feedback control.

\section{The Reflected Running-Maximum Control Problem}

We fix the time horizon $T>0$ and we let $(\Omega,\mathcal F,\FF=(\cF_t)_{0\le t\le T},\mathbb P)$ be a filtered probability space satisfying the usual conditions and supporting a Brownian motion $W=(W_t)_{0\le t\le T}$. The control processes take values in the set $A=[\underline\delta,\overline\delta]$ with $\underline\delta<0<\overline\delta$, and we denote by $\cA$ the set of $\FF$-progressively measurable processes taking values in $A$. For a starting time \(t\), \(\cA_t\) denotes the corresponding class of admissible controls on \([t,T]\).

\subsection{Underlying state process}
We introduce the basic reflected load process.
Let $\Theta:[0,T]\to \RR$ be a bounded continuous function. We can think of $\Theta$ as the average seasonal load component  toward which the electricity demand of the data center mean-reverts.  
For $\delta\in\cA$, $t\in[0,T]$ and $\ell\geq \ellbar$, the evolution of the load starting from $L_t=\ell$ at time $t\in[0,T]$ is given by:
\begin{equation}
\label{fo:load_dynamics}
dL^{t,\ell}_s=\bigl[-\lambda(L^{t,\ell}_s-\Theta_s)+\delta_s\bigr]\dd s+\sigma\dd W_s+\dd K^{t,\ell}_s,\qquad L^{t,\ell}_t=\ell,
\end{equation}
where $\lambda$ and $\sigma$ are positive constants, and $K^{t,\ell}$ is the adapted non-decreasing process reflecting the load at its lower boundary $\ellbar$ so that $L^{t,\ell}_s\geq \ellbar$ for all $s\ge t$, 
\[
K^{t,\ell}_t=0,
\qquad \text{and}\quad
\int_t^T \mathbf 1_{\{L_s>\ellbar\}}\dd K^{t,\ell}_s=0,
\]
and so that $s\mapsto K^{t,\ell}_s$ increases only when $L^{t,\ell}_s=\ellbar$.
These processes, as well as those introduced below, can be assumed to be defined on the whole interval \([0,T]\) by setting them constant equal to their value at time \(t\) on the interval \([0,t]\).
They can be constructed using the one-dimensional Skorokhod map. More precisely, for a candidate load path \(L\), set
\[
\xi_s^L=\ell+\int_t^s\bigl[-\lambda(L_u-\Theta_u)+\delta_u\bigr]\,du
+\sigma(W_s-W_t),
\]
and require \((L,K)=\mathcal S(\xi^L)\), where \(\mathcal S\) denotes the lower Skorokhod map. In this notation the minimal lower reflection is given pathwise by
\[
K_s=\sup_{t\le u\le s}(\ellbar-\xi_u^L)^+,\qquad
L_s=\xi_s^L+K_s.
\]
This is an implicit fixed-point representation of the reflected SDE; it is not an autonomous unreflected Ornstein--Uhlenbeck equation. Standard Lipschitz estimates for the one-dimensional Skorokhod problem give existence and pathwise uniqueness; see, for example, \cite{LionsSznitman1984}.

\vskip 4pt\noindent
Next, we introduce the running maximum process.
For $m\geq \ell$, we define the running maximum by
\begin{equation}
\label{fo:rmax}
M^{t,\ell,m}_s=\max\{m,\sup_{u\in[t,s]}L^{t,\ell}_u\}.
\end{equation}
$M^{t,\ell,m}$ is continuous, nondecreasing, and increases only on the set $\{L^{t,\ell}_s=M^{t,\ell,m}_s\}$.
The state space of the pair $(L,M)$ is the closed wedge
\[
D=\{(\ell,m):\ellbar\leq \ell\leq m\}.
\]
When the dependence on the control is relevant, we write \(L^{t,\ell,\delta}\) and \(M^{t,\ell,m,\delta}\).

\subsection{Cost Functional and Value Function}
We drop (at least momentarily) the superscripts to lighten the notations.
The overall cost faced by the data center is the aggregate of several sources which we introduce separately.
The first one is given by the (short)  running cost function $f$ defined as:
\begin{equation}
\label{fo:short_cost}
f(t,\ell,\delta)=(a+b\ell-\alpha_0)\delta+\frac{\beta}{2}\delta^2
\end{equation}
for some fixed parameters \(a\in\RR\), \(b\geq0\), \(\beta>0\), and a fixed offset or marginal rebate parameter \(\alpha_0\in\RR\). We interpret \(f\) as a reduced-form marginal adjustment cost for the controlled load, rather than a full accounting model for the electricity bill of the data center. In particular, the baseline cost of serving the existing load and the main revenue sources of the data center are not modeled explicitly. This convention is useful for isolating the interaction between the control effort, the CP penalty, and the DR incentive in our stylized stochastic control problem. The second source is the price to pay for a typical \emph{Coincident Peak} (CP) program of the Independent System Operator (ISO) of the region in which the data center operates. We model it as
\begin{equation}
\label{fo:peak_cost}
\kappa\int_t^T \varphi'(L^{t,\ell,\delta}_s)\dd M^{t,\ell,m,\delta}_s.
\end{equation}
for some positive constant $\kappa>0$, and a peak-penalty function \(\varphi:[\ellbar,\infty)\to\RR\) which is assumed to be convex, nondecreasing, \(C^2\), and with at most polynomial growth.
It penalizes the high values of the load over the period, with a charge paid each time a new peak is created.  We assume \(\varphi'(\ellbar)\ge0\), which gives \(\varphi'\ge0\) on \([\ellbar,\infty)\) because of our convexity assumption. The two cases motivating the discussion are \(\varphi(r)=r\), which gives a linear peak charge, and \(\varphi(r)=r^2/2\), which gives the load-weighted penalty \(\kappa\int L_s\,dM_s\). 
Pathwise, the integral with respect to the running maximum is a Stieltjes integral, so since \(L\) is continuous we have the following simplification.

\begin{proposition}[Reduction of the peak Stieltjes term]\label{pr:peak-reduction-v6}
For every admissible control \(\delta\) and every initial state \((\ell,m)\in D\), almost surely
$$
\int_t^T \varphi'(L_s^{t,\ell,\delta})\dd M_s^{t,\ell,m,\delta}
=\int_t^T \varphi'(M_s^{t,\ell,m,\delta})\dd M_s^{t,\ell,m,\delta}
=\varphi(M_T^{t,\ell,m,\delta})-\varphi(m).
$$
\end{proposition}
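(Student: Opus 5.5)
The argument is pathwise: fix \(\omega\) in the full-measure set on which \(L=L^{t,\ell,\delta}\) is continuous and \(M=M^{t,\ell,m,\delta}\) is given by \eqref{fo:rmax}. On that set \(M\) is continuous and nondecreasing on \([t,T]\), so \(\dd M\) is a finite nonnegative Borel measure \(\mu\) on \([t,T]\) with no atoms. Both integrals in the statement are then ordinary Lebesgue--Stieltjes integrals against \(\mu\) of the bounded continuous integrands \(\varphi'(L_s)\) and \(\varphi'(M_s)\). Continuity of \(\varphi'\) (since \(\varphi\in C^2\)) and continuity of \(L\) and \(M\) on the compact interval \([t,T]\) guarantee both integrands are well defined.

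\emph{First equality.} I will show \(\mu\) is carried by the closed set \(Z=\{s\in[t,T]: L_s=M_s\}\), that is, \(\mu([t,T]\setminus Z)=0\). The open set \(\{s: L_s<M_s\}\) (relative to \([t,T]\)) is a countable union of relatively open intervals \(I\). On the closure of such an interval, continuity gives \(\sup_{u\in[t,s]}L_u\le M_{s_0}\) for \(s\in I\), where \(s_0\) is the left endpoint. Hence \(M\) is constant on \(I\) and \(\mu(I)=0\). One subtlety is the left endpoint \(s=t\) when \(\ell<m\), where \(M\) is constant near \(t\); this case is covered by the same argument. Summing over the countably many intervals gives \(\mu(\{L<M\})=0\). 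Since \(\varphi'(L_s)=\varphi'(M_s)\) on \(Z\), the two integrands agree \(\mu\)-a.e., and the first equality follows.

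\emph{Second equality.} This is the chain rule for Stieltjes integrals against a continuous monotone integrator. Since \(\varphi\in C^1\) and \(M\) is continuous of bounded variation, I will take a partition \(t=s_0<\dots<s_n=T\) and write
\[
\varphi(M_T)-\varphi(m)=\sum_i\bigl[\varphi(M_{s_{i+1}})-\varphi(M_{s_i})\bigr]=\sum_i\varphi'(\eta_i)\bigl(M_{s_{i+1}}-M_{s_i}\bigr),
\]
with \(\eta_i\) between \(M_{s_i}\) and \(M_{s_{i+1}}\), using the mean value theorem and \(M_t=m\). By uniform continuity of \(\varphi'\circ M\) on \([t,T]\), this Riemann--Stieltjes sum converges to \(\int_t^T\varphi'(M_s)\dd M_s\) as the mesh tends to zero. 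Alternatively, one can use the substitution formula for the pushforward of \(\mu\) under \(M\), which is Lebesgue measure on \([m,M_T]\) because \(M\) is continuous and nondecreasing.

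The only step with real content is the support claim \(\mu(\{L<M\})=0\), specifically the careful handling of the interval decomposition and the initial segment where \(m>\ell\). Everything else is deterministic calculus. No integrability issues arise, because everything is pathwise on a compact interval.
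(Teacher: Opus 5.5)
Your proof is correct and follows the same route as the paper. The paper also argues that \(dM\) is carried by the contact set \(\{L=M\}\), so the two integrands agree \(dM\)-a.e., and then applies the chain rule for the continuous finite-variation path \(M\) to get \(\varphi(M_T)-\varphi(m)\); your interval decomposition of \(\{L<M\}\) and your mean-value Riemann--Stieltjes sums just spell out the standard facts the paper cites without proof.
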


\begin{proof}
The running maximum \(M\) is continuous, nondecreasing, and increases only on the contact set \(\{L=M\}\). Hence \(L_s=M_s\) for \(dM_s\)-a.e. \(s\), and the first equality follows. Since \(M\) has finite variation and \(\varphi\in C^1\), the pathwise change-of-variables formula for finite-variation paths gives
\[
d\varphi(M_s)=\varphi'(M_s)\,dM_s .
\]
Integrating from \(t\) to \(T\), and using \(M_t=m\), gives the identity.
\end{proof}

Thus from a purely mathematical viewpoint, the CP cost can be interpreted as a terminal cost \(\kappa\varphi(M_T)\), up to the initial-state constant \(-\kappa\varphi(m)\), over the enlarged state process $(L,M)$. However, we keep the Stieltjes integral formulation of the cost because it records the economic interpretation: the charge is accumulated when the load creates a new demand peak. In estimates and in the numerical implementation, however, it is legitimate and often simpler to use the terminal representation.

\vskip 2pt
We shall also include in our analysis the possibility for the data center to be enrolled in a demand-response program. The cost (which is in fact a reward because of the minus sign) corresponding to this third source is given by:
\begin{equation}
\label{fo:DR_cost}
\int_t^T\Pi_s(B_s-L_s)^+ds
\end{equation}
where the deterministic function \(t\mapsto B_t\) is a bounded continuous baseline target chosen by the Independent System Operator (ISO), and \(t\mapsto \Pi_t\ge0\) is a bounded continuous incentive rate.
This third source in the expected cost is an incentive for the data center to keep its load below the baseline target when the rate $\Pi_t$ is nonzero.

\vskip 4pt
So the data center will attempt to minimize its overall expected cost, namely the objective 
\begin{equation}
\label{fo:full_cost}
J(t,\ell,m;\delta)=\EE\left[\int_t^T f(s,L_s,\delta_s)\dd s+
\kappa\int_t^T \varphi'(L_s)\dd M_s-\int_t^T\Pi_s(B_s-L_s)^+ds\right],
\end{equation}

The value function of the optimization problem is thus:
\begin{equation}
\label{fo:value}
v(t,\ell,m)=\inf_{\delta\in\cA_t}J(t,\ell,m;\delta)\qquad
\text{with}\quad v(T,\ell,m)=0.
\end{equation}

\section{Regularity of the State Process}

In this section, we gather some of the properties of the load process we shall use in the sequel. We denote by $(L^{t,\ell,\delta}_s)_{t\le s\le T}$ the load process starting from $L_t=\ell$ at time $t$ and controlled by $\delta\in\cA$. So we add the control process $\delta$ to the superscript to emphasize its role and impact.

\begin{proposition}
\label{pr:load}
(Load monotonicity) Under ordered initial data $\ell\leq\ell'$, with the same control $\delta$, and the same Brownian motion,
\begin{equation}
\label{fo:load_monotonicity}
0\leq L_s^{t,\ell',\delta}-L_s^{t,\ell,\delta}\leq (\ell'-\ell)e^{-\lambda(s-t)}.
\end{equation}
which implies that 
\begin{equation}
\label{fo:load_Lip}
\sup_{t\le s\le T} |L_s^{t,\ell,\delta}-L_s^{t,\ell',\delta}|\leq |\ell-\ell'|.
\end{equation}
\end{proposition}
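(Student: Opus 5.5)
The plan is to study the gap process $\Delta_s=L_s^{t,\ell',\delta}-L_s^{t,\ell,\delta}$ pathwise. Both loads are driven by the same control and the same Brownian path, so the noise and the control cancel in the difference:
\[
d\Delta_s=-\lambda\Delta_s\,ds+dK'_s-dK_s,\qquad \Delta_t=\ell'-\ell\ge0,
\]
where $K'=K^{t,\ell'}$ and $K=K^{t,\ell}$. Hence $\Delta$ has finite variation and is continuous. Applying the product rule to $e^{\lambda(s-t)}\Delta_s$ gives
\[
e^{\lambda(s-t)}\Delta_s=(\ell'-\ell)+\int_t^s e^{\lambda(u-t)}\,dK'_u-\int_t^s e^{\lambda(u-t)}\,dK_u .
\]
Both bounds in \eqref{fo:load_monotonicity} will be read off this identity, using the support properties of $dK$ and $dK'$.

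\emph{Lower bound $\Delta\ge0$.} Argue by contradiction. Suppose $\Delta_{s_1}<0$ for some $s_1$, and let $s_0=\sup\{u\le s_1:\Delta_u\ge0\}$, so that $\Delta_{s_0}=0$ by continuity and $\Delta<0$ on $(s_0,s_1]$. On this interval $L^{t,\ell}>L^{t,\ell'}\ge\ellbar$, so $dK=0$ there. The identity, written between $s_0$ and $s_1$, then gives
\[
e^{\lambda(s_1-t)}\Delta_{s_1}=e^{\lambda(s_0-t)}\Delta_{s_0}+\int_{s_0}^{s_1}e^{\lambda(u-t)}\,dK'_u\ge0,
\]
which is a contradiction.

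\emph{Upper bound.} Since $\Delta\ge0$, the measure $dK'$ charges only times where $L^{t,\ell'}=\ellbar$. At such times $L^{t,\ell}\le L^{t,\ell'}=\ellbar$, hence $L^{t,\ell}=\ellbar$ and $\Delta=0$. So I would again localize. Suppose $e^{\lambda(s_1-t)}\Delta_{s_1}>\ell'-\ell$. Two cases arise.

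\begin{enumerate}
\item If $\Delta>0$ on all of $[t,s_1]$, then $dK'=0$ there, and the identity gives $e^{\lambda(s_1-t)}\Delta_{s_1}\le\ell'-\ell$, because the $dK$ term is subtracted. This contradicts the assumption.
\item Otherwise, let $s_0$ be the last zero of $\Delta$ before $s_1$. On $(s_0,s_1]$ we have $dK'=0$, so $e^{\lambda(s_1-t)}\Delta_{s_1}\le e^{\lambda(s_0-t)}\Delta_{s_0}=0$, again a contradiction.
\end{enumerate}

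Thus $0\le\Delta_s\le(\ell'-\ell)e^{-\lambda(s-t)}$. The Lipschitz bound \eqref{fo:load_Lip} follows immediately from $e^{-\lambda(s-t)}\le1$ and symmetry in $\ell,\ell'$.

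The main obstacle is rigor in the localization step, because $K$ and $K'$ are only continuous and nondecreasing, not absolutely continuous. The key point is that $dK'$ restricted to an open interval where $L^{t,\ell'}>\ellbar$ is zero. This follows directly from the condition $\int\mathbf 1_{\{L>\ellbar\}}dK=0$ together with continuity of $L$, which makes $\{L>\ellbar\}$ open. Everything is pathwise on the almost-sure event where both reflected solutions exist and are unique, as provided by the Skorokhod construction.
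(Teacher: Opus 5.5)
Your proof is correct. Its skeleton is the same as the paper's: the gap equation $d\Delta_s=-\lambda\Delta_s\,ds+dK'_s-dK_s$, in which noise and control cancel, combined with the integrating factor $e^{\lambda(s-t)}$. Where you differ is in how you justify the signs of the reflection terms.

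The paper imports two facts from the monotonicity of the Skorokhod map, or a comparison theorem for reflected equations: the ordering $L^{t,\ell}\le L^{t,\ell'}$ and the measure inequality $dK^{t,\ell'}\le dK^{t,\ell}$. After that, $d\Delta\le-\lambda\Delta\,ds$ and Gronwall finish in one line.

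You instead derive the ordering from scratch, by a last-zero argument that uses only the complementarity conditions. For the upper bound you need only the weaker fact that $dK'$ does not charge $\{\Delta>0\}$, which is immediate once the ordering is known.

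What your route buys:
\begin{enumerate}
\item A self-contained pathwise proof. This matters because the Skorokhod-map monotonicity applies to fixed input paths, whereas here the input $\xi^L$ depends on the solution itself. What is really needed is a comparison theorem for the reflected SDE, which the paper only gestures at.
\item It sidesteps $dK'\le dK$ as measures. That inequality is strictly stronger than $K'\le K$, not ``equivalent'' as the paper says. Your Case~2 argument in fact shows $\Delta$ vanishes identically after its first zero $\tau_0$. Since $\Delta>0$ before $\tau_0$, this yields $dK'=\mathbf 1_{[\tau_0,T]}\,dK\le dK$, recovering the paper's measure inequality as a by-product.
\end{enumerate}

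The paper's route is shorter, but only modulo the cited comparison result.
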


\begin{proof}
Since we use the same $\delta\in\cA$ and the same Brownian path, if we set $\Delta_s=L^{t,\ell',\delta}_s-L^{t,\ell,\delta}_s$ for $s\in [t,T]$, standard properties of the Skorokhod map give
$$
L_s^{t,\ell,\delta}\le L_s^{t,\ell',\delta}\qquad\text{and}\qquad K_s^{t,\ell',\delta}\le K_s^{t,\ell,\delta}
$$
for all \(s\in[t,T]\). This is the standard monotonicity of the one-dimensional lower Skorokhod map; equivalently, applying the comparison theorem for reflected equations gives \(dK_s^{t,\ell',\delta}\le dK_s^{t,\ell,\delta}\) as measures. Subtracting the two reflected SDEs gives
\[
d\Delta_s=-\lambda\Delta_s\,ds+dK_s^{t,\ell',\delta}-dK_s^{t,\ell,\delta}
\le -\lambda\Delta_s\,ds,
\]
and Gronwall's lemma yields \(\Delta_s\le(\ell'-\ell)e^{-\lambda(s-t)}\). This proves \eqref{fo:load_monotonicity}. Since \(e^{-\lambda(s-t)}\le1\), \eqref{fo:load_Lip} follows directly.
\end{proof}

The proof of the following proposition will use the simple estimate below.

\begin{lemma}
\label{le:TV}
Let us assume that \(x\) and \(h\) are deterministic continuous functions on an interval \([a,b]\), with \(h_s\geq0\) non-increasing and bounded by \(d=\|h\|_\infty\), and let \(m\ge x_a\). Then if we define \(N_s\) for \(s\in[a,b]\) by
\[
N_s = m\vee\sup_{a\leq u\leq s}(x_u+h_u)-m\vee\sup_{a\leq u\leq s}x_u,
\]
we have \(0\leq N_s\leq d\) and its total variation over the interval $[a,b]$ satisfies \(\TV_{[a,b]}(N)\leq2d\).
\end{lemma}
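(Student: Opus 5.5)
The plan is to write \(N_s=A_s-B_s\) with
\[
A_s=m\vee\sup_{a\le u\le s}(x_u+h_u),\qquad B_s=m\vee\sup_{a\le u\le s}x_u .
\]
Both \(A\) and \(B\) are continuous and nondecreasing. Since \(0\le h\le d\), we have \(x_u\le x_u+h_u\le x_u+d\) for every \(u\). Taking suprema and then the maximum with \(m\) gives \(B_s\le A_s\le B_s+d\), which proves \(0\le N_s\le d\).

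For the total variation I will split \(N\) into its positive and negative variations, \(\TV_{[a,b]}(N)=P+Q\), with \(N_b-N_a=P-Q\). This gives
\[
\TV_{[a,b]}(N)=2P+N_a-N_b\le 2P+N_a .
\]
So the whole task is to control the upward variation \(P\) together with \(N_a\). Since \(m\ge x_a\), we have \(N_a=(m\vee(x_a+h_a))-m\le h_a\).

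The key structural observation concerns the times at which \(N\) can increase. Because \(B\) is nondecreasing, \(N\) can increase only where \(A\) increases. On the support of \(dA\), the point \(s\) is a time at which \(x_s+h_s\) attains a new running maximum strictly above \(m\), so \(A_s=x_s+h_s\). Combined with \(B_s\ge x_s\), this gives
\[
N_s=h_s-(B_s-x_s)\le h_s .
\]
Hence every upward move of \(N\) ends at a level at most \(h_s\), and \(h_s\) is non-increasing in \(s\).

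The target is the estimate \(P+N_a\le d\), more precisely \(P\le h_a-N_a\); combined with the decomposition above, this yields \(\TV_{[a,b]}(N)\le 2d\).

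The main obstacle is ruling out repeated up-and-down oscillations of \(N\), each of which could contribute to \(P\). I will first treat piecewise monotone or finitely oscillating \(x\) and then pass to the limit by uniform approximation, using lower semicontinuity of total variation. Consider an excursion in which \(N\) rises on an interval \([r,s]\) where \(A\) increases. If \(B\) is flat there, the rise equals \(A_s-A_r\), and
\[
A_s-A_r=(x_s+h_s)-A_r\le (x_s+h_s)-(x_r+h_r)+(h_r-h_s)\quad\text{when } A_r=x_r+h_r .
\]
If \(A\) was flat just before \(r\), I will compare instead with the last time \(A\) increased.

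The invariant I will propagate along the path is
\[
N_s+\sum(\text{upward increments of }N\text{ before }s)\le h_a ,
\]
where the sum also counts the previous decreases. Equivalently, I will show that the quantity \(A_s-\sup_{u\le s}(x_u+h_a)\) is controlled, using \(x_u+h_u\le x_u+h_a\) and the fact that \(h\) can only decrease. If this invariant is hard to close directly, my fallback is to compare \(A\) with \(\tilde A_s=m\vee\sup_{u\le s}x_u+h_a=B_s+h_a\) in the case of constant \(h\equiv h_a\). In that case \(N\) is monotone with variation at most \(d\). I will then treat the decrease of \(h\) as a perturbation that can only lower \(A\), and bound its extra contribution to the variation by \(h_a-h_b\le d\).
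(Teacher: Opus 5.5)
You prove $0\le N\le d$ correctly, and your reduction $\TV_{[a,b]}(N)=2P+N_a-N_b\le 2P+N_a$ is correct. The target $P+N_a\le d$ is in fact true. But nothing in the proposal proves it, so the total-variation bound is left open.

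Your observation that each upward move of $N$ ends at a level at most $h_s$ controls levels, not accumulated increments. By itself it cannot exclude the oscillations you correctly identify as the main obstacle.

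The invariant you propose to propagate is false as written. Take $h\equiv d>0$ and let $x$ increase linearly from $x_a=m-2d$ to $x_c=m$ on some $[a,c]$. In your notation $B\equiv m$ there while $A$ rises from $m$ to $m+d$, so $N$ increases monotonically from $0$ to $d$. At $s=c$ the left-hand side equals $2d>h_a$, whichever increments the sum is meant to count, since $N$ is monotone there.

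The fallback does not close the gap either. Replacing $h$ by the constant $h_a$ moves $A$ by at most $h_a-h_b$ in sup norm. Sup-norm closeness gives no control on the variation of the difference, and bounding that variation is a running-maximum total-variation estimate of the same type as the lemma itself. The reduction to piecewise monotone $x$ is harmless but does not address this difficulty.

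The missing idea is to split at $\tau=\inf\{s\in[a,b]:x_s>m\}$ (with $\tau=b$ if the set is empty) and show that $N$ is monotone on each piece. On $[a,\tau]$ your $B$ is identically $m$, so $N=A-m$ is nondecreasing. On $[\tau,b]$ one has $B_r=\sup_{a\le u\le r}x_u$. Since $h$ is non-increasing, for $\tau\le r_1\le r_2$,
\[
A_{r_1}\ge\sup_{a\le u\le r_1}(x_u+h_{r_1})=B_{r_1}+h_{r_1},\qquad
x_u+h_u\le B_{r_2}+h_{r_1}\quad(r_1\le u\le r_2).
\]
Hence $A_{r_2}-A_{r_1}\le(B_{r_2}+h_{r_1}-A_{r_1})^+\le B_{r_2}-B_{r_1}$, i.e.\ $N$ is non-increasing after $\tau$.

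Your excursion estimate $A_s-A_r\le x_s-x_r$ is a fragment of this. Without the split at $\tau$ and the lower bound $A_r\ge B_r+h_r$, it compares the increments of $A$ with those of $x$ rather than those of $B$.

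Consequently $P=N_\tau-N_a$, so $P+N_a=N_\tau\le d$, which is exactly your target. Equivalently, $\TV_{[a,b]}(N)=(N_\tau-N_a)+(N_\tau-N_b)\le 2d$.

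This is the structure of the paper's proof, where the roles of $A$ and $B$ are swapped. The paper obtains monotonicity after $\tau$ in two steps. First, $m\vee\sup(x+h)$ is flat on the excursions of $x$ below its running maximum, again because $h$ is non-increasing. Second, it compares directly the times at which $x$ sits at its running maximum.
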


\begin{proof}
Set
\[
A_s=m\vee\sup_{a\leq u\leq s}x_u,\qquad
B_s=m\vee\sup_{a\leq u\leq s}(x_u+h_u).
\]
Obviously \(N_s=B_s-A_s\ge0\). Moreover \(B_s\le A_s+d\), so \(0\le N_s\le d\). Let
\[
\tau=\inf\{s\in[a,b]:x_s>m\},
\]
with \(\tau=b\) if the set is empty. On \([a,\tau]\), \(A_s=m\), and therefore
\[
N_s=B_s-m=m\vee\sup_{a\le u\le s}(x_u+h_u)-m
\]
is non-decreasing. Its total increase on this interval is at most \(d\), since \(0\le N_s\le d\).
If the set \(\{s\in[a,b]:x_s>m\}\) is empty, then \(\tau=b\), \(A_s\equiv m\) on the whole interval, and this already gives \(\TV_{[a,b]}(N)\le d\).

It remains to control the variation on \([\tau,b]\). On this interval \(A_s=\sup_{a\le u\le s}x_u\). The set
\[
O=\{s\in[\tau,b]:x_s<A_s\}
\]
is open in \([\tau,b]\), and can be written as the countable union of disjoint open intervals
\[
O=\bigcup_i(s_i,t_i).
\]
On each such interval, \(A\) is constant. By continuity, at every finite left endpoint \(s_i\) one has \(x_{s_i}=A_{s_i}\); if \(s_i=\tau\), this also follows from the definition of \(\tau\) and continuity, since \(A_\tau=m=x_\tau\) unless the set \(\{x>m\}\) is empty. Hence, for \(u\in[s_i,t_i]\),
\[
x_u+h_u\le A_{s_i}+h_{s_i}=x_{s_i}+h_{s_i}\le B_{s_i},
\]
where we used that \(h\) is non-increasing. Thus \(B\), and therefore \(N=B-A\), is flat on each interval \((s_i,t_i)\).

On the complement of \(O\), the path \(x\) is at its running maximum. If \(r_1\le r_2\) are two points of this complement, then \(A_{r_1}=x_{r_1}\), \(A_{r_2}=x_{r_2}\), and
\[
\begin{aligned}
N_{r_2}-N_{r_1}
&=B_{r_2}-A_{r_2}-(B_{r_1}-A_{r_1})\\
&=B_{r_2}-B_{r_1}+x_{r_1}-x_{r_2}\\
&=\max\Bigl(0,\sup_{r_1\le u\le r_2}(x_u+h_u)-B_{r_1}\Bigr)+x_{r_1}-x_{r_2}\\
&\le \max\Bigl(0,\sup_{r_1\le u\le r_2}(x_u-x_{r_1})\Bigr)+x_{r_1}-x_{r_2}\\
&\le \max(0,x_{r_2}-x_{r_1})+x_{r_1}-x_{r_2}\\
&\le0.
\end{aligned}
\]
Thus \(N\) is flat on the intervals where \(x\) is strictly below its running maximum and is non-increasing on the complementary pieces. Consequently \(N\) is non-increasing on \([\tau,b]\). Since \(N\) remains in \([0,d]\), the total variation on \([a,\tau]\) is at most \(d\), and the total variation on \([\tau,b]\) is at most \(d\). Hence \(\TV_{[a,b]}(N)\le2d\).
\end{proof}

\begin{proposition}
\label{pr:rmax}
\begin{itemize}
\item[(i)] (Running-maximum regularity)
\begin{equation}
\label{fo:rmax_Lip}
\sup_s |M_s^{t,\ell,m,\delta}-M_s^{t,\ell',m',\delta}|
\leq |m-m'|+\sup_s |L_s^{t,\ell,\delta}-L_s^{t,\ell',\delta}|.
\end{equation}
\item[(ii)] (Running-maximum TV stability) Fix the same control $\delta\in\cA_t$ and the same Brownian motion. For any two initial states $(\ell,m),(\ell',m')\in D$,
\begin{equation}
\label{fo:M_TV}
\TV_{[t,T]}\bigl(M^{t,\ell',m',\delta}-M^{t,\ell,m,\delta}\bigr)
\leq 2|\ell'-\ell|+|m'-m|.
\end{equation}
\end{itemize}
Consequently,
the terminal running maxima are Lipschitz in the initial data:
\[
|M_T^{t,\ell',m',\delta}-M_T^{t,\ell,m,\delta}|
\leq |\ell'-\ell|+|m'-m|.
\]
\end{proposition}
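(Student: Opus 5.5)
Part (i) follows from the elementary inequality $|\max\{a,\sup x\}-\max\{a',\sup y\}|\le |a-a'|+\sup|x-y|$, applied pathwise with $a=m$, $a'=m'$, $x=L^{t,\ell,\delta}$, $y=L^{t,\ell',\delta}$ on $[t,s]$. Both the maximum of two numbers and the supremum over $[t,s]$ are $1$-Lipschitz in the sup norm. Taking the supremum over $s$ gives \eqref{fo:rmax_Lip}. Combined with \eqref{fo:load_Lip} from Proposition~\ref{pr:load}, this yields the final Lipschitz bound on $M_T$. (That bound also follows from (ii), since $|N_T-N_t|\le\TV(N)+|N_t|$; the direct route gives the sharper constant $1$ in front of $|\ell'-\ell|$.)

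For part (ii), I split the difference through an intermediate state. Using the triangle inequality for total variation,
\[
M^{t,\ell',m'}-M^{t,\ell,m}=\bigl(M^{t,\ell',m'}-M^{t,\ell',m''}\bigr)+\bigl(M^{t,\ell',m''}-M^{t,\ell,m''}\bigr)+\bigl(M^{t,\ell,m''}-M^{t,\ell,m}\bigr),
\]
with some admissible $m''\ge\ell\vee\ell'$. A cleaner option is two steps: first change $m$ with $\ell$ fixed, then change $\ell$ with the maximum level fixed. WLOG $\ell\le\ell'$, so $m\vee m'\ge \ell'$ and I can use the intermediate level $\hat m=m\vee m'$ in a suitable order.

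\textbf{Changing only $m$.} With $x=L^{t,\ell}$ fixed, the difference $\max\{m_2,\sup x\}-\max\{m_1,\sup x\}$ for $m_1\le m_2$ is the function $g(S_s)$, where $S_s=\sup_{[t,s]}x$ is nondecreasing and $g(y)=(m_2\vee y)-(m_1\vee y)$ is nonincreasing. So the difference is monotone in $s$ and takes values in $[0,m_2-m_1]$, giving total variation at most $|m_2-m_1|$.

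\textbf{Changing only $\ell$.} Set $x=L^{t,\ell}$ and $h=L^{t,\ell'}-L^{t,\ell}$. By Proposition~\ref{pr:load}, $h\ge0$ and $h\le(\ell'-\ell)e^{-\lambda(s-t)}$, but Lemma~\ref{le:TV} needs $h$ \emph{non-increasing}. This is the main obstacle. To get it, I will use the Skorokhod representation: $h$ satisfies $dh=-\lambda h\,ds+dK'-dK$, where $dK'\le dK$ as measures (the comparison noted in the proof of Proposition~\ref{pr:load}). Hence $dh\le -\lambda h\,ds\le0$, so $h$ is non-increasing, nonnegative, and bounded by $d=\ell'-\ell$. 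Applying Lemma~\ref{le:TV} with $a=t$ and common level $m'$ (which is $\ge \ell'\ge x_t$) gives $\TV\le 2|\ell'-\ell|$.

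Ordering the steps as $(\ell,m)\to(\ell,m')\to(\ell',m')$ keeps every intermediate state in $D$: $m'\ge\ell'\ge\ell$. Each step uses the same control and Brownian motion. Summing the two bounds gives \eqref{fo:M_TV}. If instead $\ell'<\ell$, I swap the roles of the two states, go through $(\ell',m)$ with $m\ge\ell>\ell'$, and reach the same bound by symmetry of the total variation.
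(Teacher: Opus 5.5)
Your proof is correct and follows the paper's argument. Part (i) uses the elementary max/sup Lipschitz inequality, and part (ii) splits the difference into a pure change of the initial maximum (monotone difference, variation at most $|m'-m|$) and a pure change of the initial load at a common admissible level, handled by Lemma~\ref{le:TV}; the paper routes through $\bar m=m\vee m'$ in three pieces, one of which vanishes identically, so the only difference is the choice of intermediate state. Your explicit derivation that $h=L^{t,\ell'}-L^{t,\ell}$ is non-increasing, from $dh\le-\lambda h\,ds$ and $dK^{t,\ell'}\le dK^{t,\ell}$, is a useful addition: the paper attributes this to Proposition~\ref{pr:load}, whose statement records only the exponential bound, although its proof contains exactly this differential inequality.
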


\begin{proof}
\eqref{fo:rmax_Lip} follows from the fact that $|a\vee x-b\vee y|\le |a-b|+|x-y|$ and the definition \eqref{fo:rmax} of the running maximum. We now prove \eqref{fo:M_TV}.
If \(m=m'\) and \(\ell\le\ell'\), we use Lemma \ref{le:TV} with $x_s=L^{t,\ell,\delta}_s$ and \(h_s=L^{t,\ell',\delta}_s-L^{t,\ell,\delta}_s\), which is nonnegative and nonincreasing by Proposition \ref{pr:load}, together with the bound \eqref{fo:load_Lip}. The same conclusion holds if \(\ell'\le \ell\), after exchanging the two loads. For general \(m,m'\), set \(\bar m=m\vee m'\). Since \((\ell,m)\) and \((\ell',m')\) belong to \(D\), the intermediate pairs \((\ell,\bar m)\) and \((\ell',\bar m)\) also belong to \(D\). Then
\begin{equation*}
\begin{split}
\TV\bigl( M^{t,\ell',m',\delta}-M^{t,\ell,m,\delta}\bigr) 
&\leq \TV\bigl(M^{t,\ell',m',\delta}-M^{t,\ell',\bar m,\delta}\bigr)
+\TV\bigl(M^{t,\ell',\bar m,\delta}-M^{t,\ell,\bar m,\delta}\bigr)\\
&\quad+\TV\bigl(M^{t,\ell,\bar m,\delta}-M^{t,\ell,m,\delta}\bigr)\\
&\leq |\bar m-m'|+2|\ell-\ell'|+|\bar m-m|\\
&=|m'-m|+2|\ell-\ell'|.
\end{split}
\end{equation*}
Here we used the equal-initial-maximum case for the middle term. For the first and third terms, with the same load and two different initial maxima, the difference of the corresponding running maxima is monotone between its initial value and \(0\), so its total variation is bounded by the difference of the two initial maxima. The terminal Lipschitz estimate follows from \eqref{fo:rmax_Lip} and Proposition \ref{pr:load}.
\end{proof}

\begin{lemma}[Short-time state estimate]\label{le:short-time-state-v5}
Let \(K\subset D\) be compact. There is a constant \(C_K\), depending only on \(K\) and the model data, such that for all \((\ell,m)\in K\), all \(t\le t'\), and all bounded admissible ordinary controls \(u\),
\[
\EE\left[\sup_{t\le s\le t'}|L_s^{t,\ell,u}-\ell|+M_{t'}^{t,\ell,m,u}-m\right]\le C_K |t'-t|^{1/2}.
\]
\end{lemma}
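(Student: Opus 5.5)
The plan is to estimate the two terms of the statement separately, and to treat the second as a consequence of the first. Since \(m\ge\ell\), the definition \eqref{fo:rmax} gives
\[
0\le M_{t'}^{t,\ell,m,u}-m=\Bigl(\sup_{t\le s\le t'}L_s^{t,\ell,u}-m\Bigr)^+\le \sup_{t\le s\le t'}\bigl(L_s^{t,\ell,u}-\ell\bigr)^+\le \sup_{t\le s\le t'}|L_s^{t,\ell,u}-\ell|.
\]
So it suffices to show \(\EE[\sup_{t\le s\le t'}|L_s-\ell|]\le C_K|t'-t|^{1/2}\), with \(C_K\) uniform over \((\ell,m)\in K\) and over controls taking values in \(A\). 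We may also assume \(t'-t\le T\); otherwise we work on \([t,t'\wedge T]\).

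For the load, I would use the Skorokhod representation \(L_s=\xi_s+K_s\) with \(K_s=\sup_{t\le r\le s}(\ellbar-\xi_r)^+\). Because \(\ell\ge\ellbar\), we have \(\ellbar-\xi_r\le \ell-\xi_r\). Hence \(0\le K_s\le \sup_{t\le r\le s}|\xi_r-\ell|\), and therefore
\[
\sup_{t\le s\le t'}|L_s-\ell|\le 2\sup_{t\le s\le t'}|\xi_s-\ell|.
\]
Next, from the definition of \(\xi\),
\[
|\xi_s-\ell|\le\int_t^s\bigl(\lambda|L_r-\ell|+\lambda|\ell|+\lambda\|\Theta\|_\infty+\max(|\underline\delta|,\overline\delta)\bigr)\,dr+\sigma|W_s-W_t|.
\]
Set \(Y_s=\sup_{t\le r\le s}|L_r-\ell|\). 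Combining the two bounds gives, pathwise,
\[
Y_s\le 2\lambda\int_t^s Y_r\,dr+2C_1(s-t)+2\sigma\sup_{t\le r\le s}|W_r-W_t|,
\]
where \(C_1=\lambda\sup_{(\ell,m)\in K}|\ell|+\lambda\|\Theta\|_\infty+\max(|\underline\delta|,\overline\delta)\). This is the only place \(K\) enters, and it keeps the constant uniform. The nonrandom coefficients in front of \(Y_r\) allow Gronwall's lemma to be applied pathwise. Since the forcing term is nondecreasing in \(s\), this yields
\[
Y_{t'}\le e^{2\lambda T}\Bigl(2C_1(t'-t)+2\sigma\sup_{t\le r\le t'}|W_r-W_t|\Bigr).
\]
Taking expectations and using \(\EE\sup_{t\le r\le t'}|W_r-W_t|\le 2(t'-t)^{1/2}\) (Doob's inequality) gives the claim. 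Here \((t'-t)\le T^{1/2}(t'-t)^{1/2}\) absorbs the drift term.

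The main obstacle is the implicit fixed-point nature of the reflected equation: the drift of \(\xi\) depends on \(L\) itself, reflection included. Two points handle this. First, we never need a bound on \(K\) separate from \(\xi\); the Skorokhod map bounds it by \(\sup|\xi-\ell|\). Second, the dependence on \(L\) is linear, so a pathwise Gronwall argument closes the estimate. I will check that this Gronwall step is legitimate. \(Y\) is finite and continuous pathwise because \(L\) is continuous, so no integrability issue arises before taking expectations. Note also that Proposition \ref{pr:load} is not needed here.
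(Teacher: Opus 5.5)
Your proof is correct and follows essentially the same route as the paper. You reduce the running-maximum term to $\sup_{t\le s\le t'}|L_s-\ell|$ using $m\ge\ell$, control that supremum through the Lipschitz property of the Skorokhod map (your bound $\sup|L_s-\ell|\le 2\sup|\xi_s-\ell|$ is exactly this property applied against the constant path $\ell$), and close with a pathwise Gronwall argument and a maximal inequality for the Brownian increment. Your write-up is slightly more self-contained than the paper's: centering at $\ell$ and using compactness of $K$ removes the need for the separate first-moment bound the paper invokes, and Doob's $L^2$ inequality does the job of BDG.
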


\begin{proof}
The one-dimensional Skorokhod map is Lipschitz on continuous paths. Applying this property to the driving path of the reflected equation gives, for \(h=t'-t\),
\[
\sup_{t\le s\le t'}|L_s^{t,\ell,u}-\ell|
\le C_K h+C_K\int_t^{t'}\sup_{t\le r\le s}|L_r^{t,\ell,u}-\ell|\,ds
+C_K\sup_{t\le s\le t'}|W_s-W_t|.
\]
The constant \(C_K\) uses the compactness of \(K\), boundedness of \(u\) and \(\Theta\), and the standard first-moment bound for the reflected load on the short interval. Gronwall's lemma, followed by the Burkholder-Davis-Gundy inequality for the Brownian increment, gives
\[
\EE\left[\sup_{t\le s\le t'}|L_s^{t,\ell,u}-\ell|\right]\le C_K|t'-t|^{1/2}.
\]
Since \(m\ge\ell\),
\[
0\le M_{t'}^{t,\ell,m,u}-m
\le \sup_{t\le s\le t'}(L_s^{t,\ell,u}-\ell)^+
\le \sup_{t\le s\le t'}|L_s^{t,\ell,u}-\ell|,
\]
which proves the estimate.
\end{proof}

\section{Regularity of the Value Function}

\begin{proposition}[Spatial Lipschitz continuity]\label{pr:spatial-lip-v5}
For every compact \(K\subset D\), there is a constant \(C_K>0\) such that
\[
|v(t,\ell,m)-v(t,\ell',m')|\leq C_K(|\ell-\ell'|+|m-m'|)
\]
for all \((\ell,m),(\ell',m')\in K\).
\end{proposition}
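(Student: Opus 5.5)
The plan is to compare the two costs under a common control. We fix \(t\), two points \((\ell,m),(\ell',m')\in K\), and an arbitrary \(\delta\in\cA_t\), and drive both states with the same Brownian motion. We bound
\[
|J(t,\ell,m;\delta)-J(t,\ell',m';\delta)|\le C_K(|\ell-\ell'|+|m-m'|)
\]
with \(C_K\) independent of \(\delta\). The inequality \(|\inf_\delta J-\inf_\delta J'|\le\sup_\delta|J-J'|\) then gives the claim. We treat the three pieces of \eqref{fo:full_cost} separately.

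For the running cost, \(f(s,\ell,\delta)-f(s,\ell',\delta)=b(\ell-\ell')\delta\). Since \(|\delta|\le\max(|\underline\delta|,\overline\delta)\), the bound \eqref{fo:load_Lip} gives a contribution of at most \(bT\max(|\underline\delta|,\overline\delta)|\ell-\ell'|\). The quadratic term in \(\delta\) cancels because the control is shared. For the demand-response term, \(x\mapsto(B_s-x)^+\) is \(1\)-Lipschitz and \(\Pi\) is bounded, so the contribution is at most \(T\|\Pi\|_\infty|\ell-\ell'|\), again by Proposition \ref{pr:load}. Neither of these needs compactness.

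The peak term is the main obstacle, because \(\varphi'\) is only locally Lipschitz with polynomial growth. We would use Proposition \ref{pr:peak-reduction-v6} to replace it by \(\kappa(\varphi(M_T)-\varphi(m))\). The difference then splits into \(\kappa(\varphi(M_T)-\varphi(M_T'))\) and \(\kappa(\varphi(m')-\varphi(m))\).

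The second piece is bounded by \(\kappa\sup_{[\ellbar,R_K]}\varphi'\,|m-m'|\), where \(R_K\) bounds the \(m\)-coordinates in \(K\).

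For the first piece, the mean value theorem and the terminal Lipschitz estimate of Proposition \ref{pr:rmax} give
\[
|\varphi(M_T)-\varphi(M_T')|\le \sup_{\xi\in[M_T\wedge M_T',\,M_T\vee M_T']}\varphi'(\xi)\,\bigl(|\ell-\ell'|+|m-m'|\bigr).
\]
Since \(\varphi'\) is nondecreasing with polynomial growth, the supremum is at most \(C(1+|M_T|^p+|M_T'|^p)\). It remains to bound \(\EE[(M_T^{t,\ell,m,\delta})^p]\) uniformly over \(\delta\in\cA_t\) and \((\ell,m)\in K\). Here \(M_T\le m+\sup_s|L_s-\ell|\). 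The Skorokhod Lipschitz bound used in Lemma \ref{le:short-time-state-v5}, together with Gronwall's lemma and the Burkholder–Davis–Gundy inequality applied on all of \([t,T]\) rather than on a short interval, gives \(\EE[\sup_s|L_s|^p]\le C_K\) uniformly in bounded controls. This uses that \(\Theta\) and \(\delta\) are bounded.

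If one prefers to keep the Stieltjes form, one could instead write \(\int\varphi'(M)\,dM-\int\varphi'(M')\,dM'\). This splits as \(\int(\varphi'(M)-\varphi'(M'))\,dM+\int\varphi'(M')\,d(M-M')\). The first integral is handled by the local Lipschitz property of \(\varphi'\) and \eqref{fo:rmax_Lip}. The second is bounded by \(\sup\varphi'(M')\) times the total-variation bound \eqref{fo:M_TV}, which is precisely where Lemma \ref{le:TV} enters. Either route gives a constant depending only on \(K\), \(T\) and the model data. Summing the three contributions finishes the proof.
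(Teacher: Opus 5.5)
Your proof is correct and is essentially the paper's argument. The paper takes an \(\varepsilon\)-optimal control at one initial point and applies Lemma~\ref{le:Y_Lip}, whose proof is exactly your decomposition: linearity of \(f\) in \(\ell\), the \(\|\Pi\|_\infty\)-Lipschitz demand-response term, and the reduction of the peak term to \(\kappa(\varphi(M_T)-\varphi(m))\), combined with the pathwise terminal Lipschitz bound of Proposition~\ref{pr:rmax} and moment bounds uniform in the control. Your \(\sup_\delta|J-J'|\) formulation is equivalent to that \(\varepsilon\)-optimal step. One small caution: as written, your optional Stieltjes variant needs polynomial growth of the local Lipschitz constant of \(\varphi'\), that is, of \(\varphi''\), which is not assumed, so keep the terminal-reduction route you give first.
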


\begin{proof}
For $\epsilon>0$, let $\delta^\epsilon\in\cA_t$ such that 
$$
J(t,\ell,m,\delta^\epsilon)\le v(t,\ell,m)+\epsilon.
$$
Then
\begin{equation*}
\begin{split}
v(t,\ell',m')-v(t,\ell,m)
&\le J(t,\ell',m',\delta^\epsilon)-v(t,\ell,m)\\
&\le J(t,\ell',m',\delta^\epsilon)-J(t,\ell,m,\delta^\epsilon)+\epsilon\\
&\le C\bigl(|\ell-\ell'|+|m-m'|\bigr) +\epsilon
\end{split}
\end{equation*}
where we used Lemma \ref{le:Y_Lip}. Since $\epsilon>0$ was arbitrary, we conclude that
$$
v(t,\ell',m')-v(t,\ell,m) \le C_K\bigl(|\ell-\ell'|+|m-m'|\bigr)
$$
and reversing the roles of $v(t,\ell',m')$ and $v(t,\ell,m)$ gives the desired result.
\end{proof}

\begin{proposition}[Local time-H\"older continuity]\label{pr:time-holder-v6}
For every compact \(K\subset D\), there exists \(C_K>0\) such that
\[
|v(t,\ell,m)-v(t',\ell,m)|\leq C_K|t'-t|^{1/2}
\]
for \((\ell,m)\in K\).
\end{proposition}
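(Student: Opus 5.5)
The plan is to compare $v(t,\ell,m)$ and $v(t',\ell,m)$ for $t\le t'$ through the dynamic programming idea, implemented directly by concatenating controls rather than through a full DPP. This is possible because the time dependence enters only through the horizon and the time-dependent data $\Theta,B,\Pi$. I will use the terminal representation of the peak charge from Proposition~\ref{pr:peak-reduction-v6}, so that
\[
J(t,\ell,m;\delta)=\EE\Bigl[\int_t^T\bigl(f(s,L_s,\delta_s)-\Pi_s(B_s-L_s)^+\bigr)ds+\kappa\bigl(\varphi(M_T)-\varphi(m)\bigr)\Bigr].
\]
Write $h=t'-t$.

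\emph{Upper bound for $v(t,\ell,m)-v(t',\ell,m)$.} Take an $\epsilon$-optimal control $\delta'$ for $(t',\ell,m)$. Define $\delta$ on $[t,T]$ by setting $\delta_s=0$ on $[t,t')$ and then applying $\delta'$, suitably composed with the shifted Brownian increments, from time $t'$ onward. Then
\[
J(t,\ell,m;\delta)=\EE\Bigl[\int_t^{t'}\bigl(f-\Pi(B-L)^+\bigr)ds\Bigr]+\EE\bigl[J(t',L_{t'},M_{t'};\delta'')\bigr].
\]
The first term is $O(h)$ because $f$ with $\delta=0$ vanishes, and because $\Pi$, $B$ are bounded and $\EE|L_s|$ is bounded on compacts. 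For the second term, the flow property of the reflected SDE and of the running maximum gives $J(t',L_{t'},M_{t'};\delta'')$. I then use the pathwise Lipschitz estimates of Propositions~\ref{pr:load} and~\ref{pr:rmax} for a fixed control, which give $|J(t',\ell_1,m_1;\delta'')-J(t',\ell,m;\delta'')|\le C(1+|\ell_1|+|m_1|)^p(|\ell_1-\ell|+|m_1-m|)$ with the polynomial growth of $\varphi'$ absorbed. Combining this with Lemma~\ref{le:short-time-state-v5}, the second term is bounded by $J(t',\ell,m;\delta')+C_K h^{1/2}$. One subtlety: the polynomial weight forces a Cauchy--Schwarz step, so I would prove the $L^2$ analogue of Lemma~\ref{le:short-time-state-v5} in the same way (BDG gives $\EE\sup|W_s-W_t|^2\le Ch$), together with uniform moment bounds $\EE\sup_s|L_s|^q\le C_K$.

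\emph{Reverse bound.} Take an $\epsilon$-optimal $\delta$ for $(t,\ell,m)$ and split its cost at $t'$. The running cost on $[t,t']$ is $\ge -C h$, since $f\ge -C(1+|L|)$ when $A$ is bounded. Given $\mathcal F_{t'}$, the remaining cost is at least $v(t',L_{t'},M_{t'})$. This lower bound via conditioning is the measurability part of the DPP, the "easy" inequality. To avoid needing it in full, I would argue instead with regular conditional probabilities, or simply use the fact that for a.e. frozen past the continuation control is admissible from $t'$. The spatial Lipschitz bound of Proposition~\ref{pr:spatial-lip-v5} then gives $v(t',L_{t'},M_{t'})\ge v(t',\ell,m)-C(\ldots)(|L_{t'}-\ell|+M_{t'}-m)$, and the short-time estimate again yields $C_Kh^{1/2}$.

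\emph{Main obstacle.} The main obstacle is that Proposition~\ref{pr:spatial-lip-v5} is only local (constants depend on the compact $K$), whereas $(L_{t'},M_{t'})$ is unbounded. I would handle this by establishing a global version with polynomially growing constant, $|v(t,x)-v(t,y)|\le C(1+|x|+|y|)^p|x-y|$. This follows from the same coupling proof, since $\varphi'$ has polynomial growth and $b\ell\delta$ is linear. I would then apply Cauchy--Schwarz with the moment bounds. The conditioning/measurable-selection issue in the reverse bound is the other delicate point, and I would handle it by the pathwise freezing argument standard for ordinary progressively measurable controls.
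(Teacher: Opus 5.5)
Your argument is correct. In one of the two directions it takes a genuinely different route from the paper's.

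\textbf{How the paper argues.} The paper derives both inequalities from the DPP of Theorem~\ref{thm:dpp-v6} on $[t,t']$.
\begin{itemize}
\item For $v(t',\ell,m)-v(t,\ell,m)$, it takes an $\varepsilon$-optimal control for $(t,\ell,m)$ and uses the easy inequality $J\ge \EE[\int_t^{t'}\cdots+v(t',L_{t'},M_{t'})]$. This is exactly your reverse bound.
\item For $v(t,\ell,m)-v(t',\ell,m)$, it runs a constant control $\delta_0$ on $[t,t']$ and invokes the hard direction of the DPP. That direction relies on the pasting and measurable $\varepsilon$-optimal selection of Proposition~\ref{pr:control-dpp-construction-v6}. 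It then applies the spatial Lipschitz estimate to $v(t',L^0_{t'},M^0_{t'})-v(t',\ell,m)$.
\end{itemize}

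\textbf{Where you differ.} You replace that second step with a concatenation. You run the zero control on $[t,t')$ and then a \emph{single} $\varepsilon$-optimal control for the deterministic point $(t',\ell,m)$, realized as a functional of the Brownian increments after $t'$. You combine this with a Lipschitz bound for $J(t',\cdot\,;\delta')$ in the initial data that is uniform over controls. This needs only the flow property and the independence of $(L_{t'},M_{t'})$ from the future increments. No measurable selection and no hard DPP inequality are required.

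\textbf{Where you are more careful.} The paper applies the compact-set constant $C_K$ of Proposition~\ref{pr:spatial-lip-v5} at the random point $(L_{t'},M_{t'})$, which need not lie in $K$. You instead use a polynomially weighted Lipschitz bound, together with Cauchy--Schwarz, an $L^2$ version of Lemma~\ref{le:short-time-state-v5}, and uniform moment bounds. That actually justifies the $C_K h^{1/2}$ estimate. The needed polynomial growth of $\varphi'$ follows from convexity, since $0\le\varphi'(r)\le\varphi(r+1)-\varphi(r)$.

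\textbf{Trade-off.} The paper's proof is shorter once the DPP is available. Yours is more self-contained, since one direction avoids the DPP machinery entirely.

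\textbf{A bookkeeping slip.} With the terminal representation, your decomposition of $J(t,\ell,m;\delta)$ omits the peak increment $\kappa\EE[\varphi(M_{t'})-\varphi(m)]$ on $[t,t']$. This term is nonnegative and bounded by $C_K h^{1/2}$ by the same short-time and moment estimates, so your conclusion is unaffected.
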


\begin{proof}
Let us assume that \(t\le t'\), and set \(h=t'-t\). We first record the short-time estimate used below. Uniformly over bounded ordinary controls \(\delta\),
\[
\EE\left[\sup_{t\le s\le t'}|L_s^{t,\ell,\delta}-\ell|+M_{t'}^{t,\ell,m,\delta}-m\right]\le C_K |t'-t|^{1/2}.
\]
This is Lemma \ref{le:short-time-state-v5}, and it is the reflected analogue of the standard estimate obtained from the Lipschitz property of the Skorokhod map, Gronwall's lemma, and the Burkholder-Davis-Gundy inequality.
The DPP used below is proved in Section~\ref{sec:dpp} from the usual restriction, pasting, flow, and measurable-selection construction; it does not use the time-continuity estimate proved here.

For \(\varepsilon>0\), let \(\delta^\varepsilon\in\cA_t\) be such that
\[
J(t,\ell,m;\delta^\varepsilon)\le v(t,\ell,m)+\varepsilon .
\]
Applying the DPP of Theorem \ref{thm:dpp-v6} on the deterministic interval \([t,t']\), and using this nearly optimal control, gives the following inequality.
For this display, write
\(L_s^\varepsilon=L_s^{t,\ell,\delta^\varepsilon}\) and
\(M_s^\varepsilon=M_s^{t,\ell,m,\delta^\varepsilon}\).
\begin{align*}
v(t',\ell,m)-v(t,\ell,m)
&\le \EE\bigl[
v(t',\ell,m)-v(t',L_{t'}^\varepsilon,M_{t'}^\varepsilon)
\bigr]\\
&\quad-\EE\left[\int_t^{t'} f(s,L_s^\varepsilon,\delta_s^\varepsilon)\dd s\right]\\
&\quad-\kappa\EE\left[\int_t^{t'}\varphi'(L_s^\varepsilon)\dd M_s^\varepsilon\right]\\
&\quad+\EE\left[\int_t^{t'}\Pi_s(B_s-L_s^\varepsilon)^+\,\dd s\right]
+\varepsilon\\
&=(i)+(ii)+(iii)+(iv)+\varepsilon .
\end{align*}
By the spatial Lipschitz estimate, the first term satisfies
\[
|(i)|\le C_K\EE\bigl[
|L_{t'}^\varepsilon-\ell|
+|M_{t'}^\varepsilon-m|
\bigr]\le C_K h^{1/2}.
\]
The linear growth of \(f\) in the load variable, boundedness of the controls, and the compactness of \(K\), together with the short-time estimate, give
\[
|(ii)|\le C_K h .
\]
Finally,
\[
0\le M_{t'}^\varepsilon-m
\le \sup_{t\le s\le t'}|L_s^\varepsilon-\ell|,
\]
and \(\varphi'\) is bounded on the stopped compact neighborhood, while the polynomial tails are controlled by the standard reflected-SDE moment estimates, so
\[
|(iii)|\le C_K h^{1/2}.
\]
The demand-response term is controlled by the boundedness of \(B\) and \(\Pi\), together with the short-time state estimate, so
\[
|(iv)|\le C_K h .
\]
Putting these estimates together and then letting \(\varepsilon\downarrow0\), we obtain
\[
v(t',\ell,m)-v(t,\ell,m)\le C_K h^{1/2}.
\]

For the opposite inequality, we proceed as in the original argument by using a simple control on the short interval \([t,t']\).
Fix a constant admissible control \(\delta_0\in A\), and write
\(L_s^0=L_s^{t,\ell,\delta_0}\) and \(M_s^0=M_s^{t,\ell,m,\delta_0}\).
The DPP, evaluated at this fixed control on \([t,t']\), yields
\begin{align*}
v(t,\ell,m)-v(t',\ell,m)
&\le
\EE\bigl[
v(t',L_{t'}^0,M_{t'}^0)
-v(t',\ell,m)
\bigr]\\
&\quad+\EE\left[\int_t^{t'} f(s,L_s^0,\delta_0)\dd s\right]\\
&\quad+\kappa\EE\left[\int_t^{t'}\varphi'(L_s^0)\dd M_s^0\right]\\
&\quad
-\EE\left[\int_t^{t'}\Pi_s(B_s-L_s^0)^+\,\dd s\right].
\end{align*}
The last term is nonpositive, and in any case has absolute value at most \(C_Kh\).
The same estimates as above, now with the fixed control \(\delta_0\), give
\[
v(t,\ell,m)-v(t',\ell,m)\le C_K h^{1/2}.
\]
The two inequalities prove the claim.
\end{proof}

\begin{lemma}[Polynomial growth of the value function]\label{le:polynomial-growth-v6}
If \(|\varphi(r)|\le C_\varphi(1+|r|^q)\) for some \(q\ge1\), then there is a constant \(C>0\), depending only on the model data, the bounds on \(B,\Pi\), and \(\varphi\), such that
\[
|v(t,\ell,m)|\le C(1+|m|^q),\qquad (t,\ell,m)\in[0,T]\times D.
\]
\end{lemma}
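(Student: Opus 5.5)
The plan is to bound \(v\) from above and from below by bounding \(J(t,\ell,m;\delta)\) uniformly over \(\delta\in\cA_t\). By Proposition~\ref{pr:peak-reduction-v6}, the peak term equals \(\kappa(\varphi(M_T)-\varphi(m))\). Since \(\varphi\) is nondecreasing and \(M_T\ge m\), this term is nonnegative and bounded above by \(\kappa C_\varphi(2+|M_T|^q+|m|^q)\).

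The key estimate is a moment bound on \(M_T\) that is uniform in the control. Write \(\Theta^*=\|\Theta\|_\infty\) and \(\bar d=\max(|\underline\delta|,\overline\delta)\). Let \(U\) solve the reflected equation driven by the drift \(-\lambda(U-\Theta_s)+\bar d\), with the same Brownian motion and initial value \(\ell\). The comparison property of the one-dimensional Skorokhod map, used as in Proposition~\ref{pr:load}, then gives \(L_s^{t,\ell,\delta}\le U_s\). A cleaner route avoids comparison altogether. Set \(X_s=L_s-\ell\). Outside the boundary \(\ellbar\), the process \(L\) has drift at most \(\lambda(\Theta^*+|\ellbar|)+\bar d\) whenever \(L\ge \ellbar\). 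Hence
\[
L_s\le \ell+C(s-t)+\sigma\sup_{t\le u\le s}(W_u-W_t)+\sigma\sup_{t\le u\le s}(W_t-W_u),
\]
which follows from the Skorokhod representation \(L=\xi+K\), with \(K_s\le \sup_u(\ellbar-\xi_u)^+\) and \(\xi\) controlled by the bounded part of its drift. The term \(-\lambda L_u\) in the drift only helps when \(L\ge 0\). When \(L<0\), \(L\) is still at least \(\ellbar\), so the contribution of that term is bounded. The result is \(\ellbar\le L_s\le \ell+C+2\sigma\sup_{u\le T}|W_u-W_t|\), and therefore
\[
\EE\bigl[|M_T|^q\bigr]\le C\bigl(1+|m|^q+|\ell|^q\bigr)\le C\bigl(1+|m|^q\bigr),
\]
where the last step uses \(\ellbar\le\ell\le m\), so that \(|\ell|\le |m|+|\ellbar|\). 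Here BDG, or the Gaussian tail of the Brownian supremum, provides all moments. Getting this bound uniform in \(\delta\) and in the initial data is the only real obstacle. The point that needs care is that the mean reversion toward \(\Theta\) and the reflection at \(\ellbar\) never push \(L\) upward by more than a bounded amount.

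The remaining terms follow from the same estimate. The running cost satisfies \(|f|\le C(1+|L_s|)\) because \(\delta\) is bounded. Together with the bound on \(\sup_s|L_s|\), this gives \(\EE\int_t^T|f|\,ds\le C(1+|m|)\le C(1+|m|^q)\), using \(q\ge1\). The demand-response term is bounded in absolute value by \(T\|\Pi\|_\infty(\|B\|_\infty+|\ellbar|)\), since \(L\ge\ellbar\).

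Putting these together, \(|J(t,\ell,m;\delta)|\le C(1+|m|^q)\) for every \(\delta\), with a constant independent of \((t,\ell,m,\delta)\). Taking the infimum over \(\delta\) gives the upper bound on \(v\). The lower bound follows from the same uniform bound, which also shows that \(v\) is finite.
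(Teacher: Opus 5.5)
Your argument is correct and follows the paper's route: a two-sided bound on \(J(t,\ell,m;\delta)\) uniform in \(\delta\), built from the peak reduction of Proposition~\ref{pr:peak-reduction-v6}, the linear growth of \(f\), the boundedness of the demand-response term, and a control-uniform moment bound on \(\sup_s|L_s|\) and \(M_T\). The one difference is that you prove this moment bound with an explicit pathwise estimate instead of citing standard reflected-SDE estimates. Your stated justification (bounding \(K_s\le\sup_u(\ellbar-\xi_u)^+\) and \(\xi\) separately) does not work as written, because bounding \(K_s\) from above needs a lower bound on \(\xi\), and the unbounded mean-reverting drift does not supply one. The fix is to write
\[
L_s=\max\Bigl\{\xi_s,\ \ellbar+\sup_{t\le u\le s}(\xi_s-\xi_u)\Bigr\},
\]
which uses only the upper bound \(\lambda(\|\Theta\|_\infty+|\ellbar|)+\max(|\underline\delta|,\overline\delta)\) on the drift (valid because \(L\ge\ellbar\)); your claimed bound \(L_s\le \ell+C(s-t)+2\sigma\sup_{t\le u\le s}|W_u-W_t|\) then follows directly.
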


\begin{proof}
Since \(\ellbar\le \ell\le m\), it is enough to control polynomial moments of the reflected load by a constant times \(1+|m|^q\). Standard estimates for the reflected SDE, using bounded controls, bounded \(\Theta\), and the Lipschitz property of the Skorokhod map, give
\[
\EE\left[\sup_{t\le s\le T}|L_s^{t,\ell,\delta}|^q\right]\le C(1+|m|^q)
\]
uniformly over admissible controls \(\delta\). Also
\[
|M_T^{t,\ell,m,\delta}|\le C\left(1+|m|+\sup_{t\le s\le T}|L_s^{t,\ell,\delta}-\ell|\right),
\]
so \(\EE[|M_T^{t,\ell,m,\delta}|^q]\le C(1+|m|^q)\). The controls are bounded, \(f\) has at most linear growth in \(\ell\) uniformly over \(\delta\in A\), and the demand-response term has at most linear growth in \(L\) because \(B,\Pi\) are bounded. Proposition \ref{pr:peak-reduction-v6} rewrites the peak term as \(\kappa(\varphi(M_T)-\varphi(m))\). Hence
\[
\left| \EE\left[\int_t^T f(s,L_s^{t,\ell,u},u_s)\dd s
+\kappa\int_t^T \varphi'(L_s^{t,\ell,u})\dd M_s^{t,\ell,m,u}
-\int_t^T\Pi_s(B_s-L_s^{t,\ell,u})^+ds\right]\right|
\le C(1+|m|^q)
\]
for every admissible control \(u\). Taking the infimum gives the lower bound, and evaluating the cost at any fixed admissible constant control gives the upper bound.
\end{proof}

\section{Dynamic Programming Principle}\label{sec:dpp}

While a direct proof is possible, we choose to rely on an existing result proven for stochastic control problems for which the cost minimization is for a function of the terminal value of the state.
In order to use such a result, we introduce the accumulated-cost process which reads:
\begin{equation}
\label{fo:augmented}
Y_s=y+\int_t^s f(r,L_r^{t,\ell,\delta},\delta_r)\dd r+\kappa\int_t^s \varphi'(L_r^{t,\ell,\delta})\dd M_r^{t,\ell,m,\delta}
-\int_t^s\Pi_r(B_r-L_r^{t,\ell,\delta})^+dr.
\end{equation}
The merit of this introduction is to recast the running-cost problem as an optimal control problem over the $3$-dimensional process $(L_t, M_t,Y_t)$ with only a terminal cost. But first, we note the regularity property:

\begin{lemma}[Accumulated-cost stability]
\label{le:Y_Lip}
Using a prime in the notation of \eqref{fo:augmented} for load \(L^{t,\ell'}\) and running maximum \(M^{t,\ell',m'}\), and still with the same ordinary control and Brownian motion, we have
\[
\EE[|Y_T-Y_T'|]\leq C_K(|\ell-\ell'|+|m-m'|)
\]
when \((\ell,m)\) and \((\ell',m')\) range over a fixed compact subset \(K\subset D\). The constant \(C_K\) depends on \(K\), the model data, including the bounds on $B$ and $\Pi$, and the local growth constants of \(\varphi\).
\end{lemma}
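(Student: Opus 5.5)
We split \(Y_T-Y_T'\) into its three constituent pieces, with \(y=y'\) (the same initial accumulated cost). The pieces are the running-cost part, the peak part and the demand-response part, and we bound the expectation of each separately. Throughout, \(L=L^{t,\ell,\delta}\), \(L'=L^{t,\ell',\delta}\), and similarly for \(M,M'\), all driven by the same control \(\delta\) and the same Brownian motion.

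\emph{Running cost and demand response.} Since
\[
f(s,\ell,\delta)-f(s,\ell',\delta)=b(\ell-\ell')\delta
\]
and \(|\delta|\le\max(|\underline\delta|,\overline\delta)\), we get
\[
\Bigl|\int_t^T (f(s,L_s,\delta_s)-f(s,L'_s,\delta_s))\,ds\Bigr|\le bT\max(|\underline\delta|,\overline\delta)\sup_s|L_s-L_s'|.
\]
By \eqref{fo:load_Lip} this is at most \(C|\ell-\ell'|\) pathwise. The map \(x\mapsto(B_s-x)^+\) is \(1\)-Lipschitz and \(\Pi\) is bounded, so the demand-response difference is bounded pathwise by \(T\|\Pi\|_\infty|\ell-\ell'|\). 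Neither bound needs any expectation or compactness.

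\emph{Peak term.} This is the main step. There are two natural routes.

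The first route uses Proposition \ref{pr:peak-reduction-v6}. The peak difference equals
\[
\kappa\bigl(\varphi(M_T)-\varphi(M_T')\bigr)-\kappa\bigl(\varphi(m)-\varphi(m')\bigr).
\]
The second bracket is bounded by \(\sup_{K}\varphi'\cdot|m-m'|\). For the first, the mean value theorem gives
\[
|\varphi(M_T)-\varphi(M_T')|\le \varphi'(M_T\vee M_T')\,|M_T-M_T'|.
\]
Here \(\varphi'\) is nonnegative and nondecreasing by convexity. The terminal Lipschitz estimate of Proposition \ref{pr:rmax} then gives the bound
\[
\varphi'(M_T\vee M_T')\bigl(|\ell-\ell'|+|m-m'|\bigr)
\]
pathwise, where the Lipschitz factor is deterministic. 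So it only remains to bound \(\EE[\varphi'(M_T\vee M_T')]\) uniformly over \((\ell,m),(\ell',m')\in K\) and over controls.

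The second route, closer in spirit to the paper's emphasis, works directly with the Stieltjes integrals. One writes
\[
\int\varphi'(M)\,dM-\int\varphi'(M')\,dM'=\int(\varphi'(M)-\varphi'(M'))\,dM+\int\varphi'(M')\,d(M-M').
\]
The first integral is bounded using \(\varphi\in C^2\) together with \eqref{fo:rmax_Lip}. The second is bounded by \(\sup\varphi'(M')\cdot\TV(M-M')\), and the TV bound \eqref{fo:M_TV} is exactly suited to this. Either route leads to the same moment requirement, so I would present the first route for brevity and mention the second as a remark.

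\emph{Moment bound (the obstacle).} Since \(\varphi\in C^2\) with polynomial growth and is convex, \(\varphi'\) also has polynomial growth on \([\ellbar,\infty)\), say \(|\varphi'(r)|\le C(1+|r|^{p})\). I expect establishing this growth of \(\varphi'\) to need a small argument. One option is convexity: \(\varphi'(r)\le \varphi(2r)-\varphi(r)\) up to scaling for large \(r\). Otherwise it could simply be assumed as part of the ``local growth constants of \(\varphi\)'' referred to in the statement.

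We then need
\[
\EE\bigl[\sup_{s\le T}|L_s|^p+\sup_{s\le T}|L'_s|^p\bigr]\le C(1+\sup_K|m|^p).
\]
This is the moment estimate already invoked in the proof of Lemma \ref{le:polynomial-growth-v6}. It follows from the Lipschitz property of the Skorokhod map, bounded \(\delta\) and \(\Theta\), Gronwall's lemma and BDG. Since \(M_T\le m\vee\sup_s L_s\), this moment bound yields \(\EE[\varphi'(M_T\vee M_T')]\le C_K\).

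Collecting the three pieces gives
\[
\EE|Y_T-Y_T'|\le C_K\bigl(|\ell-\ell'|+|m-m'|\bigr),
\]
with \(C_K\) depending on \(K\), \(b\), \(A\), \(T\), \(\|\Pi\|_\infty\), \(\kappa\) and the growth constants of \(\varphi\), and uniform in the control.
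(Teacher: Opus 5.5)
Your first route is essentially the paper's proof. You bound the running-cost and demand-response differences pathwise via \eqref{fo:load_Lip}, rewrite the peak term as \(\kappa(\varphi(M_T)-\varphi(m))\) using Proposition~\ref{pr:peak-reduction-v6}, and control it with the terminal Lipschitz bound of Proposition~\ref{pr:rmax} plus polynomial moments of the reflected load; your mean-value step and the convexity argument for polynomial growth of \(\varphi'\) just spell out what the paper leaves implicit.

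One caveat, which affects only your side remark: the Stieltjes route needs a moment bound on \(\sup\varphi''\) over the random range of \(M,M'\), and being convex, \(C^2\) and of polynomial growth does not give one, so that route does not reduce to the same moment requirement as the first.
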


\begin{proof}
The \(dr\)-part is controlled by the fact that \(f\) is linear in \(\ell\), so that
\[
|f(r,L_r,u_r)-f(r,L_r',u_r)|\leq C|L_r-L_r'|.
\]
The demand-response contribution satisfies the same local Lipschitz estimate, since
\[
\left|\Pi_r(B_r-L_r)^+-\Pi_r(B_r-L_r')^+\right|
\le \|\Pi\|_\infty |L_r-L_r'| .
\]
For the peak term, Proposition \ref{pr:peak-reduction-v6} gives
\[
\int_t^s \varphi'(L_r)\,dM_r=\varphi(M_s)-\varphi(m).
\]
The terminal maxima are Lipschitz in the initial state by Proposition \ref{pr:rmax}. Since the reflected load has finite moments of all orders under bounded controls, the polynomial growth and local Lipschitz bounds on \(\varphi\) imply
\[
\EE\left[
|\varphi(M_s)-\varphi(M_s')|+|\varphi(m)-\varphi(m')|
\right]\le C_K(|\ell-\ell'|+|m-m'|).
\]
Combining the two estimates proves the claim.
\end{proof}

We restate the problem on the canonical path space for ease of reference.

\begin{proposition}[Shifted control stability]\label{pr:control-dpp-construction-v6}
The admissible class can be realized on the shifted canonical space so that:
\begin{itemize}[leftmargin=18pt]
\item if \(\delta\in\cA_t\) and \(\tau\) is a stopping time, the restricted/shifted control \(\delta^\tau\) belongs to the corresponding continuation class after \(\tau\);
\item if a control is used before \(\tau\) and a measurable family of continuation controls is selected as a function of \((\tau,L_\tau,M_\tau,Y_\tau)\), then the pasted control is again admissible;
\item the controlled reflected system has the flow property after \(\tau\): conditionally on \(\mathcal F_\tau\), the continuation depends on the past only through \((\tau,L_\tau,M_\tau,Y_\tau)\) and the shifted continuation control.
\end{itemize}
\end{proposition}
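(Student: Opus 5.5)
We plan to work on the canonical Wiener space \(\Omega^0=C_0([0,T];\RR)\) with canonical process \(W\) and the raw filtration \(\FF^0\), completed afterwards. The standard representation (e.g.\ Doob's functional representation) writes every \(\delta\in\cA_t\) as \(\delta_s(\omega)=\bar\delta(s,\omega_{\cdot\wedge s}-\omega_t)\) for a progressively measurable functional \(\bar\delta\) with values in \(A\). We then take \(\cA_t\) to be the class of controls depending only on the increments of \(W\) after \(t\), allowing an additional \(\mathcal F_t\)-measurable parameter. Given a stopping time \(\tau\) and \(\omega\), we define the shifted path \(\omega^{\tau}=\omega_{\tau(\omega)+\cdot}-\omega_{\tau(\omega)}\). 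The restricted control is
\[
\delta^{\tau,\omega}_s(\tilde\omega)=\delta_s(\omega\otimes_\tau\tilde\omega),
\]
where \(\omega\otimes_\tau\tilde\omega\) is the concatenated path. By construction \(\delta^{\tau,\omega}\) is progressively measurable in \(\tilde\omega\), and it is \(A\)-valued, so it lies in \(\cA_{\tau(\omega)}\) for every \(\omega\). This proves the first item. Measurability in \(\omega\) follows because the map \((\omega,\tilde\omega)\mapsto\omega\otimes_\tau\tilde\omega\) is measurable.

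For the pasting item, let \(\Gamma(s,\ell,m,y)\) be a Borel-measurable family of continuation functionals on \([s,T]\). Such a family is obtained by a Jankov--von Neumann measurable selection of \(\epsilon\)-optimal controls. The selection is on a countable partition of \([0,T]\times D\times\RR\) into small Borel cells, and it uses the continuity of \(v\) from Propositions \ref{pr:spatial-lip-v5} and \ref{pr:time-holder-v6} to pass from representative points to cells. We then set
\[
\hat\delta_s=\delta_s\mathbf 1_{\{s<\tau\}}+\Gamma(\tau,L_\tau,M_\tau,Y_\tau)_s(\omega^\tau)\mathbf 1_{\{s\ge\tau\}}.
\]
Progressive measurability is checked on the set \(\{s\ge\tau\}\). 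There it is a composition of the \(\mathcal F_\tau\)-measurable random vector \((\tau,L_\tau,M_\tau,Y_\tau)\) with a jointly measurable functional of the shifted increments up to time \(s\). Since \(\hat\delta\) is \(A\)-valued, it is admissible.

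For the flow property, the key fact is pathwise uniqueness of the reflected SDE. This rests on the Lipschitz property of the Skorokhod map cited from \cite{LionsSznitman1984} in Section 2. Because the equation is solved pathwise by a fixed-point argument on the Skorokhod map, the solution is a measurable functional \(L=\Phi(t,\ell,\delta,W)\). The restriction of the solution after \(\tau\) solves the same reflected equation started at \((\tau,L_\tau)\), driven by \(\omega^\tau\) and \(\delta^{\tau,\omega}\). The reason is that the Skorokhod map restarts consistently: for \(s\ge\tau\),
\[
K_s-K_\tau=\sup_{\tau\le u\le s}\bigl(\ellbar-\xi_u+\xi_\tau-L_\tau\bigr)^+ .
\]
By uniqueness the two solutions coincide. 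The running maximum satisfies
\[
M_s=\max\Bigl\{M_\tau,\sup_{\tau\le u\le s}L_u\Bigr\},
\]
and \(Y_s\) equals \(Y_\tau\) plus increments driven by the continuation. Hence \((L,M,Y)\) on \([\tau,T]\) equals the solution started from \((\tau,L_\tau,M_\tau,Y_\tau)\). Independence of \(\omega^\tau\) from \(\mathcal F_\tau\), by the strong Markov property of Brownian motion, then gives the conditional statement.

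The main obstacle is the measurable selection in the pasting item. Exact selection of \(\epsilon\)-optimal controls is not available because the dependence on \((\ell,m)\) is only known through \(v\). We handle this by approximating: pick optimal controls at countably many points and use them on their cells. Lemma \ref{le:Y_Lip}, together with the local Lipschitz bound on \(v\), then shows the loss is at most \(C_K\) times the cell diameter, which can be made below \(\epsilon\). Unbounded states are handled by localizing on compacts using the polynomial growth of Lemma \ref{le:polynomial-growth-v6}.
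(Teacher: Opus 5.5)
Your restriction argument via the concatenation \(\omega\otimes_\tau\tilde\omega\), your check of progressive measurability of the pasted control given a jointly measurable family, and your flow argument are correct. They follow the paper's route, only more explicitly. Your restart identity for the Skorokhod map is right, since \(K_\tau=L_\tau-\xi_\tau\). The problem is in the selection step you attach to the pasting item, which is exactly the ingredient Theorem \ref{thm:dpp-v6} consumes.

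\textbf{The selection step is circular.} You pass from representative points to cells of \([0,T]\times D\times\RR\) using the time continuity of Proposition \ref{pr:time-holder-v6}. But that proposition is proved by applying Theorem \ref{thm:dpp-v6} on \([t,t']\), and Theorem \ref{thm:dpp-v6} is proved from the present proposition. The paper deliberately avoids this. Its selection invokes only the spatial Lipschitz bound of Proposition \ref{pr:spatial-lip-v5}, which rests on Lemma \ref{le:Y_Lip} alone, and then defers to the Bertsekas--Shreve universally measurable selection theorem or to the Bouchard--Touzi weak DPP. It even remarks that the DPP does not use the time-continuity estimate. Consistently with the gap, your closing error bound (Lemma \ref{le:Y_Lip} plus the Lipschitz bound on \(v\)) controls only the spatial displacement inside a cell; nothing controls the time displacement.

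\textbf{Controls are not transported across times.} A control \(\delta^i\) chosen at the representative time \(t_i\) lies in \(\cA_{t_i}\). You do not say how it is used from another start time \(s\) in the same cell. For \(s>t_i\), it depends on increments over \([t_i,s]\), which are not part of a continuation started at \(s\). For \(s<t_i\), it must be extended. This transport is precisely where time regularity would enter.

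\textbf{A non-circular repair.} Run your cell argument only at finitely many fixed times, that is, for deterministic \(\tau\equiv t'\) or for stopping times with finitely many values. There every continuation state sits at a known time, so purely spatial cells suffice. Lemma \ref{le:Y_Lip} and Proposition \ref{pr:spatial-lip-v5} then bound the loss, with cell diameters adapted to the local constants \(C_K\). The deterministic-time DPP is all that the proof of Proposition \ref{pr:time-holder-v6} uses, so you obtain joint continuity of \(v\) legitimately. You can then pass to a general \(\tau\) by approximating it from above by finitely-valued stopping times and using continuity of \(v\), path continuity, and the growth bound of Lemma \ref{le:polynomial-growth-v6}. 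Alternatively, adopt the Bouchard--Touzi weak DPP. It needs only semicontinuity of \(J(\cdot;\delta)\) for each fixed control, with half-open time cells \((t_i-r,t_i]\) and \(\delta^i\) extended by a constant on \([s,t_i)\).

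\textbf{Minor points.} With a countable partition no Jankov--von Neumann selection is needed, since you pick one control per cell. Also, ``pick optimal controls'' should read \(\varepsilon\)-optimal.
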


\begin{proof}
For each starting time \(t\), let \(\Omega^t=C([t,T];\RR)\) be the canonical Brownian path space, with canonical process \(W^t\), natural filtration completed in the usual way, and Wiener measure. A control on \([t,T]\) is represented by a progressively measurable process with values in the compact set \(A\). The peak cost is a path functional of \(L\) and \(M\), so no representative of the control on the support of \(dM\) is needed.

If \(\tau\) is a stopping time, the shifted canonical path is
\[
\omega^\tau_s=\omega_{\tau(\omega)+s}-\omega_{\tau(\omega)},\qquad 0\le s\le T-\tau(\omega).
\]
The shifted control is defined by
\[
\delta^\tau_s(\omega)=\delta_{\tau(\omega)+s}(\omega)
\]
for \(0\le s\le T-\tau(\omega)\). Progressive measurability is preserved under this shift, up to the usual identification of the post-\(\tau\) filtration with the filtration generated by the shifted Brownian path. Hence restriction after \(\tau\) preserves admissibility.

For pasting, let \(\delta^0\) be the pre-\(\tau\) control and let \((s,\ell',m',y')\mapsto \delta^{s,\ell',m',y'}\) be a universally measurable selection of continuation controls. Define the pasted control by using \(\delta^0\) before \(\tau\) and, after \(\tau\), the shifted representative \(\delta^{\tau,L_\tau,M_\tau,Y_\tau}\). Progressive measurability follows from the standard measurable-pasting argument for stochastic control on canonical spaces. The compactness of \(A\), together with the spatial continuity of Proposition \ref{pr:spatial-lip-v5}, gives measurable \(\varepsilon\)-optimal selections by the Borel-space measurable-selection theorem of Bertsekas--Shreve \cite{BertsekasShreve1978}; alternatively, the weak-DPP theorem of Bouchard--Touzi \cite{BouchardTouzi2011} gives the required dynamic-programming inequalities without making this selection explicit. We use the explicit selection only to display the pasting argument.

Finally, pathwise uniqueness for the one-dimensional reflected SDE and the pathwise definition of \(M\) and \(Y\) imply the flow property. Indeed, after time \(\tau\), the shifted reflected equation starts from \((L_\tau,M_\tau,Y_\tau)\), is driven by the shifted Brownian motion and the shifted control, and produces the same continuation path as the original system. This proves the stated stability properties of the admissible class.
\end{proof}

The augmented value function is
\[
\widetilde V(t,\ell,m,y)=\inf_{\delta\in\cA_t}\EE[Y_T^{t,\ell,m,y,\delta}]
=y+v(t,\ell,m).
\]

\begin{theorem}[Dynamic programming principle]\label{thm:dpp-v6}
For any starting point \((t,\ell,m)\in[0,T]\times D\) and any stopping time \(\tau\in[t,T]\),
\begin{equation*}
\begin{split}
&v(t,\ell,m)=\inf_{\delta\in\cA_t}\EE\Bigl[
\int_t^\tau f(s,L_s^{t,\ell,\delta},\delta_s)\dd s+
\kappa\int_t^\tau \varphi'(L_s^{t,\ell,\delta})\dd M_s^{t,\ell,m,\delta}\\
&\hskip 95pt
-\int_t^\tau\Pi_s(B_s-L_s^{t,\ell,\delta})^+ds+
 v(\tau,L_\tau^{t,\ell,\delta},M_\tau^{t,\ell,m,\delta})\Bigr].
\end{split}
\end{equation*}
\end{theorem}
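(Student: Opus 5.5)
The plan is to prove the two inequalities separately for the augmented problem on the state \((L,M,Y)\) with terminal cost \(Y_T\). The identity \(\widetilde V(t,\ell,m,y)=y+v(t,\ell,m)\) then translates the result back. Write \(\Gamma_\tau^\delta\) for the bracket inside the expectation in Theorem~\ref{thm:dpp-v6}. Since \(Y_\tau-y\) equals the accumulated cost up to \(\tau\), we have \(\Gamma_\tau^\delta=Y_\tau^{t,\ell,m,0,\delta}+v(\tau,L_\tau,M_\tau)\). Integrability of every term follows from the moment bounds for the reflected load, Proposition~\ref{pr:peak-reduction-v6}, and the polynomial growth of \(v\) from Lemma~\ref{le:polynomial-growth-v6}. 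Note that \(M_\tau\) and \(L_\tau\) have moments of all orders uniformly in \(\delta\).

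\emph{Step 1: \(v\le\inf_\delta\EE[\Gamma^\delta_\tau]\).} Fix \(\delta\in\cA_t\) and \(\varepsilon>0\).
\begin{itemize}
\item Cover \([0,T]\times D\) by countably many small Borel cells \(Q_i\), each containing a representative point \(z_i=(t_i,\ell_i,m_i)\).
\item For each \(i\), choose an \(\varepsilon\)-optimal control \(\delta^{i}\in\cA_{t_i}\) for \(v(z_i)\).
\item By Lemma~\ref{le:Y_Lip}, which holds with the same control, and by continuity of \(v\) (Propositions~\ref{pr:spatial-lip-v5} and \ref{pr:time-holder-v6}), the control \(\delta^i\), started from any point of \(Q_i\) with the same starting time, is \(3\varepsilon\)-optimal there. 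This requires the cells to be small, which is only possible locally because all constants are \(C_K\). So restrict to a compact \(K_n\), and handle the complement via the tail estimate \(\EE[(1+|M_\tau|^q)\mathbf 1_{\{(L_\tau,M_\tau)\notin K_n\}}]\to0\).
\item Time mismatch inside a cell is handled by choosing \(t_i\) as the right endpoint of the time cell. Alternatively, use the measurable selection of Bertsekas--Shreve directly, as in Proposition~\ref{pr:control-dpp-construction-v6}.
\item Paste \(\delta\) on \([t,\tau]\) with \(\delta^{i}\) shifted on \(\{(\tau,L_\tau,M_\tau)\in Q_i\}\). This gives an admissible control by Proposition~\ref{pr:control-dpp-construction-v6}.
\end{itemize}
By the flow property and conditioning on \(\mathcal F_\tau\),
\[
v(t,\ell,m)\le J(t,\ell,m;\text{pasted})\le \EE[\Gamma^\delta_\tau]+C\varepsilon+o_n(1).
\]
Let \(n\to\infty\) and \(\varepsilon\to0\), then take the infimum over \(\delta\).

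\emph{Step 2: \(v\ge\inf_\delta\EE[\Gamma^\delta_\tau]\).} For any \(\delta\), decompose \(J(t,\ell,m;\delta)\) as the cost on \([t,\tau]\) plus the conditional expectation of the continuation cost given \(\mathcal F_\tau\). By the flow property, the continuation cost equals \(J(\tau,L_\tau,M_\tau;\delta^\tau)\) evaluated on the shifted canonical space, for a.e. \(\omega\). Since \(\delta^\tau\) is admissible in the continuation class, this is \(\ge v(\tau,L_\tau,M_\tau)\). Hence \(J(t,\ell,m;\delta)\ge\EE[\Gamma^\delta_\tau]\), and taking the infimum gives the claim. The peak term splits additively at \(\tau\) because it is a pathwise Stieltjes integral, and the running maximum restarts from \(M_\tau\), consistent with the definition \eqref{fo:rmax}.

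\emph{Main obstacle.} The delicate point is Step 1. The constants in the spatial and time regularity are only local, and the conditional \(\varepsilon\)-optimality of the pasted control must be made uniform. I would first try the compact-exhaustion-plus-tail argument described above. The tail is controlled by polynomial growth of \(v\) and of the cost, together with uniform moment bounds. If this becomes messy, I would fall back on the weak DPP of Bouchard--Touzi \cite{BouchardTouzi2011}. Its hypotheses (stability under shifting and pasting, the flow property, and lower semicontinuity of \(J\) in the initial data for fixed control) are exactly Proposition~\ref{pr:control-dpp-construction-v6} and Lemma~\ref{le:Y_Lip}. Continuity of \(v\) then upgrades the weak DPP to the equality stated in the theorem.
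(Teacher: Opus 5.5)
\textbf{Gap: Step~1 is circular.} Your Step~2 is the paper's argument for \(v\ge\inf_\delta\EE[\Gamma^\delta_\tau]\) (flow property, conditioning on \(\mathcal F_\tau\), admissibility of the shifted control \(\delta^\tau\)), and it is fine.

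Because \(\tau\) is a general stopping time, your cells \(Q_i\) have positive extent in the time variable. The chain \(J(z;\delta^i)\le J(z_i;\delta^i)+\varepsilon\le v(z_i)+2\varepsilon\le v(z)+3\varepsilon\), for \(z\in Q_i\), therefore needs continuity of \(v\) in \(t\), which you take from Proposition~\ref{pr:time-holder-v6}. But the paper proves that proposition by applying Theorem~\ref{thm:dpp-v6} on \([t,t']\), and it states explicitly that the DPP proof must not use the time-continuity estimate.

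The first link of the chain has a related problem. Lemma~\ref{le:Y_Lip} compares costs only for a common starting time. Taking \(t_i\) to be the right endpoint of the cell still requires a bound on \(J(t',\ell',m';\delta^i)-J(t_i,\ell_i,m_i;\delta^i)\) for \(t'<t_i\), with \(\delta^i\) extended to \([t',t_i)\). Nothing in the paper provides this.

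Your fallback has the same defects. The Bouchard--Touzi semicontinuity hypothesis on \(J(\cdot;\delta)\) involves the starting time. Upgrading their weak DPP, which is stated with semicontinuous envelopes or test functions, to an equality requires \(v\) to be jointly lower semicontinuous, again including in \(t\). The paper avoids time regularity altogether: it selects \(\varepsilon\)-optimal continuation controls measurably (Bertsekas--Shreve) and pastes them via Proposition~\ref{pr:control-dpp-construction-v6}. In your proposal this route appears only as a parenthetical alternative.

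\textbf{Repair by bootstrapping.} You can keep your covering argument.
\begin{enumerate}
\item Run Step~1 for stopping times taking finitely many values \(t_1<\dots<t_N\). On \(\{\tau=t_k\}\in\mathcal F_{t_k}\) you only need spatial cells of \(D\) at the fixed time \(t_k\). There Lemma~\ref{le:Y_Lip} and Proposition~\ref{pr:spatial-lip-v5}, both proved without the DPP, together with your compact-exhaustion and tail estimate, give the \(3\varepsilon\)-optimality. Apply the lemma conditionally on \(\mathcal F_{t_k}\), using that the continuation controls, realized on the shifted canonical space, depend only on the Brownian increments after \(t_k\).
\item Deterministic \(\tau\) is all that the proof of Proposition~\ref{pr:time-holder-v6} uses. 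So joint continuity of \(v\) now follows without circularity.
\item For general \(\tau\), apply the finitely-valued result to dyadic stopping times \(\tau_n\downarrow\tau\). Let \(n\to\infty\) in \(\EE[\Gamma^\delta_{\tau_n}]\) by dominated convergence, using path continuity of \(L\), \(M\), \(Y\), joint continuity and polynomial growth of \(v\) (Lemma~\ref{le:polynomial-growth-v6}), and the uniform moment bounds.
\item Step~2 then also makes sense for general \(\tau\), since \(v(\tau,L_\tau,M_\tau)\) is now measurable.
\end{enumerate}

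\textbf{Alternative repair.} You can derive a time modulus for \(v\) directly, without the DPP. Time-shift controls on the canonical space and use the uniform continuity of \(\Theta\), \(B\), \(\Pi\), together with Lemma~\ref{le:short-time-state-v5} for the leftover interval of length \(|t'-t|\).
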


\begin{proof}
The point of introducing the accumulated-cost coordinate is that the problem can be written as a terminal-cost problem for the augmented state \((L,M,Y)\). Proposition \ref{pr:control-dpp-construction-v6} verifies the restriction, pasting, flow, and measurable-selection hypotheses needed for the weak dynamic programming theorem on the canonical space, as in \cite{BouchardTouzi2011}. Therefore the augmented value satisfies
\[
\widetilde V(t,\ell,m,0)
=\inf_{\delta\in\cA_t}\EE\left[
\widetilde V\bigl(\tau,L_\tau^{t,\ell,\delta},M_\tau^{t,\ell,m,\delta},Y_\tau^{t,\ell,m,0,\delta}\bigr)
\right].
\]
Substituting \(\widetilde V(s,\ell',m',y')=y'+v(s,\ell',m')\) and the definition of \(Y_s\) in \eqref{fo:augmented} gives the displayed formula. We spell out the two DPP inequalities.

Fix \(\delta\in\cA_t\). By the flow property of the reflected SDE and the additivity of the accumulated-cost coordinate \eqref{fo:augmented},
\[
Y_T^{t,\ell,m,0,\delta}
=Y_\tau^{t,\ell,m,0,\delta}
+Y_T^{\tau,L_\tau^{t,\ell,\delta},M_\tau^{t,\ell,m,\delta},0,\delta^\tau},
\]
where \(\delta^\tau\) is the shifted continuation control after \(\tau\). Taking conditional expectation and using the definition of \(v\) gives
\[
\EE\left[Y_T^{t,\ell,m,0,\delta}\mid\mathcal F_\tau\right]
\ge
Y_\tau^{t,\ell,m,0,\delta}
+v(\tau,L_\tau^{t,\ell,\delta},M_\tau^{t,\ell,m,\delta}).
\]
Indeed, by the flow property, the conditional continuation cost after \(\tau\) is the cost of the shifted control \(\delta^\tau\) started from \((\tau,L_\tau^{t,\ell,\delta},M_\tau^{t,\ell,m,\delta})\), while \(v\) is the infimum over all such admissible shifted continuation controls.
Taking expectations and then the infimum over \(\delta\) yields
\begin{equation*}
\begin{split}
&v(t,\ell,m)\ge
\inf_{\delta\in\cA_t}\EE\Bigl[
\int_t^\tau f(s,L_s^{t,\ell,\delta},\delta_s)\dd s+
\kappa\int_t^\tau \varphi'(L_s^{t,\ell,\delta})\dd M_s^{t,\ell,m,\delta}\\
&\hskip 95pt
-\int_t^\tau\Pi_s(B_s-L_s^{t,\ell,\delta})^+ds+
v(\tau,L_\tau^{t,\ell,\delta},M_\tau^{t,\ell,m,\delta})\Bigr].
\end{split}
\end{equation*}

For the reverse inequality, let \(\varepsilon>0\) and fix a control \(\delta\in\cA_t\) on the interval before \(\tau\). By Proposition \ref{pr:control-dpp-construction-v6}, for each possible state \((s,\ell',m')\) at time \(\tau\) one can choose a continuation control whose expected cost is within \(\varepsilon\) of \(v(s,\ell',m')\), measurably in \((s,\ell',m')\). Pasting this selected continuation after \(\tau\) to the fixed pre-\(\tau\) control gives an admissible control. Taking the infimum over the pre-\(\tau\) control after this pasting yields
\begin{equation*}
\begin{split}
&v(t,\ell,m)\le
\inf_{\delta\in\cA_t}\EE\Bigl[
\int_t^\tau f(s,L_s^{t,\ell,\delta},\delta_s)\dd s+
\kappa\int_t^\tau \varphi'(L_s^{t,\ell,\delta})\dd M_s^{t,\ell,m,\delta}\\
&\hskip 95pt
-\int_t^\tau\Pi_s(B_s-L_s^{t,\ell,\delta})^+ds+
v(\tau,L_\tau^{t,\ell,\delta},M_\tau^{t,\ell,m,\delta})\Bigr]+\varepsilon.
\end{split}
\end{equation*}
Letting \(\varepsilon\downarrow0\) proves the asserted identity.
\end{proof}

\section{Hamilton-Jacobi-Bellman (HJB) System}
We now formulate the corresponding HJB equation.

\subsection{Formal HJB System}

We first identify a local pointwise formulation of the system. We define the reduced Hamiltonian by:
\begin{equation}
\label{fo:Hamiltonian}
H(t,\ell,p)=\min_{u\in A}\left\{\frac{\beta}{2}u^2+(p+a+b\ell-\alpha_0)u\right\},
\end{equation}
We call it reduced because it does not involve the terms which are independent of the control. The argument of the minimization is obtained by projecting the unconstrained minimizer \(u^*=-(p+a+b\ell-\alpha_0)/\beta\) onto \(A\). Since $A$ is compact and convex, \(H\) is continuous and locally Lipschitz in \((\ell,p)\).
The following equations are formal consequences of the DPP obtained by applying It\^o's formula on a short time interval, dividing by the length of the interval, and sending the interval length to zero. The rigorous interpretation is the viscosity formulation given in the next sections.

\subsubsection*{Interior equation}

For $\ellbar<\ell<m$, take $\tau$ before the first time $\{L=M\}$ or $\{L=\ellbar\}$. On $[t,\tau]$, $dM_s=0$, so the DPP reduces to an ordinary controlled-diffusion identity with $m$ frozen:
\begin{equation}
\label{fo:HJB}
v_t-\lambda(\ell-\Theta_t)v_\ell+\frac{\sigma^2}{2}v_{\ell\ell}
+H(t,\ell,v_\ell)-\Pi_t(B_t-\ell)^+=0,
\end{equation}

\subsubsection*{Lower reflecting boundary}

At $\ell=\ellbar$, $\ell<m$: the term \(v_\ell\dd K_s\) in It\^o's formula, supported on $\{L_s=\ellbar\}$, gives the boundary condition:
\begin{equation}
\label{fo:HJB_BC1}
v_\ell(t,\ellbar,m)=0.
\end{equation}

\subsubsection*{Running-maximum diagonal boundary}

At $\ell=m$, It\^o's formula contributes \(v_m\dd M_s\). The peak cost contributes
\(\kappa\varphi'(L_s)\dd M_s=\kappa\varphi'(m)\dd M_s\) on the diagonal, giving the boundary condition
\begin{equation}
\label{fo:HJB_BC2}
v_m(t,m,m)+\kappa\varphi'(m)=0.
\end{equation}

\vskip 4pt
In the viscosity formulation below we use an operator \(G\), so that the subsolution condition is written as \(G\le0\) and the supersolution condition as \(G\ge0\).

\subsection{Viscosity Solution Formulation}
\label{sec:visc-form}

We introduce the operator symbol
\[
G(t,\ell,m,q,p,X)
=-q+\lambda(\ell-\Theta_t)p-\frac{\sigma^2}{2}X-H(t,\ell,p)+\Pi_t(B_t-\ell)^+,
\]
and the boundary operators:
\[
B_0(p)=-p,\qquad
B_M(m,r)=-(r+\kappa\varphi'(m)).
\]
We use test functions which are \(C^{1,2,1}\), namely once continuously differentiable in \(t\), twice continuously differentiable in \(\ell\), and once continuously differentiable in \(m\).

\begin{definition}[Viscosity solution]
An upper semicontinuous function \(u\) with \(u(T,\cdot)\leq0\) is said to be a viscosity subsolution if, whenever \(\psi\in C^{1,2,1}\) touches \(u\) from above at \((t,\ell,m)\), \(t<T\), the following relaxed conditions hold:
\[
G(\cdots)\le0,\qquad
\min\{G(\cdots),B_0(\psi_\ell)\}\le0,\qquad
\min\{G(\cdots),B_M(m,\psi_m)\}\le0,
\]
respectively in the interior, at the lower reflecting boundary, and at the running-maximum diagonal; at the corner both boundary operators are included in the minimum.

A lower semicontinuous function \(w\) with \(w(T,\cdot)\geq0\) is said to be a viscosity supersolution if, whenever \(\psi\in C^{1,2,1}\) touches \(w\) from below at \((t,\ell,m)\), \(t<T\), the corresponding relaxed inequalities hold:
\[
G(\cdots)\ge0,\qquad
\max\{G(\cdots),B_0(\psi_\ell)\}\ge0,\qquad
\max\{G(\cdots),B_M(m,\psi_m)\}\ge0,
\]
respectively in the interior, at the lower reflecting boundary, and at the running-maximum diagonal; at the corner the maximum is taken over all active operators.

A continuous function \(\phi\) is said to be a viscosity solution if it is both a viscosity subsolution and a viscosity supersolution.
\end{definition}

\begin{remark}
At $(\ellbar,\ellbar)$ both boundary conditions apply. The relaxed min--max convention is the standard viscosity way to encode reflected boundary conditions at a corner.
\end{remark}

\section{Viscosity Solutions of the HJB System}
\label{sec:viscosity}

\subsection{The Value Function as a Viscosity Solution}

\begin{theorem}[Viscosity characterization]
The value function \(v\) is a viscosity solution of the HJB system with interior equation \(G(\cdots)=0\), lower boundary operator \(B_0\), diagonal boundary operator \(B_M\), and terminal condition \(v(T,\cdot)=0\). It is unique in the comparison class specified in Theorem~\ref{thm:comparison-v5}.
\end{theorem}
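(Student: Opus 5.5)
The plan is the standard DPP-based verification, done separately at interior points, on the lower reflecting boundary, and on the diagonal. Continuity of \(v\) comes from Propositions~\ref{pr:spatial-lip-v5} and~\ref{pr:time-holder-v6}, so \(v=v^*=v_*\) and we may test \(v\) directly; the terminal condition \(v(T,\cdot)=0\) holds by definition.

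\textbf{Supersolution property.} Fix \((t,\ell,m)\) with \(t<T\) and a test function \(\psi\in C^{1,2,1}\) touching \(v\) from below there. We may assume the touching is strict and local, with \(\psi\le v\) on a small closed neighbourhood \(N_r\) of radius \(r\). Suppose, for contradiction, that all active operators are strictly negative: \(G<0\), and in addition \(B_0(\psi_\ell)<0\) if \(\ell=\ellbar\) and \(B_M(m,\psi_m)<0\) if \(\ell=m\). By continuity these strict inequalities persist on \(N_r\), with a margin \(\eta>0\) after shrinking \(r\).

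Take \(\tau=(t+h)\wedge\) the exit time of \((s,L_s,M_s)\) from \(N_r\). Use the DPP (Theorem~\ref{thm:dpp-v6}) with an \(\varepsilon h\)-optimal control, and apply It\^o's formula to \(\psi(s,L_s,M_s)\) on \([t,\tau]\). The resulting integrands are as follows.
\begin{itemize}
\item The \(ds\) integrand is \(\psi_t+(\text{drift})\psi_\ell+\tfrac{\sigma^2}{2}\psi_{\ell\ell}+f-\Pi(B-L)^+\). Using the definition of \(H\) as a minimum over \(u\in A\), it is bounded below by \(-G\ge\eta\).
\item The \(dK\) integrand is \(\psi_\ell\), which is \(>\eta\) near \(\ell=\ellbar\). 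It is irrelevant if \(\ellbar\) is not near, since \(K\) is then flat inside \(N_r\).
\item The \(dM\) integrand is \(\psi_m+\kappa\varphi'(L_s)=\psi_m+\kappa\varphi'(M_s)\), by Proposition~\ref{pr:peak-reduction-v6}. It is \(>\eta\) near the diagonal.
\end{itemize}
Hence \(0\ge \EE[\eta(\tau-t)]-\varepsilon h\) plus a strict-touching gain on \(\{\tau<t+h\}\). Lemma~\ref{le:short-time-state-v5} gives \(\mathbb P(\tau<t+h)\to0\), hence \(\EE[\tau-t]\ge h/2\) for small \(h\), and choosing \(\varepsilon<\eta/2\) yields the contradiction. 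If the diagonal is not active but \(\ell\) is close to \(m\), \(r\) is chosen smaller than \(m-\ell\) so that \(M\) is flat; the same applies to \(K\).

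\textbf{Subsolution property.} This is symmetric. Take \(\psi\) touching \(v\) from above and suppose all active operators satisfy \(G>0\) and \(B_0>0\) (respectively \(B_M>0\)), so that \(\psi_\ell<0\) and \(\psi_m+\kappa\varphi'(m)<0\) nearby. Choose a constant control \(u\in A\) attaining the minimum in \(H(t,\ell,\psi_\ell)\); by continuity, \(G^u>\eta\) persists nearby, where \(G^u\) denotes \(G\) with \(H\) replaced by the value at \(u\). The DPP inequality \(v(t,\ell,m)\le \EE[\text{cost on }[t,\tau]+v(\tau,\cdot)]\), together with It\^o's formula, gives \(0\le -\eta\,\EE[\tau-t]\) plus nonpositive \(dK\) and \(dM\) contributions. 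This is a contradiction.

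\textbf{Main obstacle and uniqueness.} The main obstacle is the corner \((\ellbar,\ellbar)\) and the diagonal, where one must make sure that both singular measures enter with the right sign and that \(\EE[\tau-t]\) is not killed by the reflections. The bound \(\EE[\tau-t]\ge h/2\) is obtained from Lemma~\ref{le:short-time-state-v5} via Markov's inequality, uniformly in the control. Integrability of the It\^o martingale term is fine because it is stopped in a compact set.

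Uniqueness in the comparison class then follows directly from Theorem~\ref{thm:comparison-v5}: \(v\) has polynomial growth by Lemma~\ref{le:polynomial-growth-v6}, so it lies in that class. Any other solution \(w\) in the class satisfies both \(w\le v\) and \(v\le w\).
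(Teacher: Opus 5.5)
Your proposal is correct and follows essentially the same route as the paper's proof. Both localize with an exit time capped at $t+h$, apply It\^o's formula to $\psi(s,L_s,M_s)$, and use the failed relaxed boundary inequalities to sign the $dK$ and $dM$ terms. Both use a constant (near-)minimizing control for the subsolution property and a near-optimal control for the supersolution property (your $\varepsilon h$ with $\varepsilon<\eta/2$ plays the role of the paper's $\varepsilon_h=o(h)$), bound $\EE[\tau-t]$ from below uniformly in the control via Lemma~\ref{le:short-time-state-v5} and Markov's inequality, and obtain uniqueness from Theorem~\ref{thm:comparison-v5}; your choice of a localization radius small enough to keep $K$ or $M$ flat near a non-active boundary is a point the paper leaves implicit.
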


\begin{proof}
Uniqueness is a consequence of the comparison principle proven in the following subsection.
The terminal condition follows from the definition of the payoff. The spatial Lipschitz estimate and Proposition \ref{pr:time-holder-v6} give the continuity needed to use ordinary test functions rather than semicontinuous envelopes.

Let \(z_0=(t_0,\ell_0,m_0)\), with \(t_0<T\), and let \(\psi\in C^{1,2,1}\). For \(h>0\) and a control \(\delta\), set
\[
\tau_h^\delta=\inf\{s\ge t_0: |L_s^{t_0,\ell_0,\delta}-\ell_0|+|M_s^{t_0,\ell_0,m_0,\delta}-m_0|\ge \rho\}\wedge(t_0+h)
\]
for a small \(\rho>0\). By Lemma \ref{le:short-time-state-v5} and Markov's inequality,
\[
\mathbb P[\tau_h^\delta<t_0+h]\le \rho^{-1}C h^{1/2}.
\]
Consequently
\[
0\le h-\EE[\tau_h^\delta-t_0]\le h\,\mathbb P(\tau_h^\delta<t_0+h)=o(h),
\]
uniformly over bounded controls. Applying It\^o's formula on \([t_0,\tau_h^\delta]\) gives
\[
\begin{aligned}
\psi(\tau_h^\delta,L_{\tau_h^\delta},M_{\tau_h^\delta})-\psi(t_0,\ell_0,m_0)
&=\int_{t_0}^{\tau_h^\delta}
\left(\psi_t+[-\lambda(L_s-\Theta_s)+\delta_s]\psi_\ell
+\frac{\sigma^2}{2}\psi_{\ell\ell}\right)(s,L_s,M_s)\dd s\\
&\quad+\int_{t_0}^{\tau_h^\delta}\psi_\ell(s,L_s,M_s)\dd K_s
+\int_{t_0}^{\tau_h^\delta}\psi_m(s,L_s,M_s)\dd M_s\\
&\quad+\sigma\int_{t_0}^{\tau_h^\delta}\psi_\ell(s,L_s,M_s)\dd W_s .
\end{aligned}
\]
The stochastic integral has expectation zero by localization and the boundedness of the derivatives of \(\psi\) on the stopped neighborhood.

We first prove the subsolution inequality. Suppose that \(\psi\) touches \(v\) from above at \(z_0\). If the relaxed subsolution condition failed at \(z_0\), then all three active operators would be strictly positive. Equivalently, for some \(\eta>0\), after reducing \(\rho\) if necessary, one can choose a constant ordinary control \(u_0\in A\), attaining the minimum in the definition of \(H\) at \(z_0\) up to \(\eta\), such that the \(ds\)-integrand
\[
\psi_t+[-\lambda(\ell-\Theta_t)+u_0]\psi_\ell+\frac{\sigma^2}{2}\psi_{\ell\ell}+f(t,\ell,u_0)-\Pi_t(B_t-\ell)^+
\]
is strictly negative at \(z_0\). By continuity of \(\psi\) and its derivatives, of  \(\Theta\), $\Pi$, $B$,  \(f\), and \(\varphi'\), reducing \(\rho\) if necessary, one can ensure that this \(ds\)-integrand is at most \(-\eta\) throughout the stopped neighborhood. If the lower boundary is active, the failed boundary condition gives \(\psi_\ell\le -\eta\) on the lower boundary in the same neighborhood. Since \(dK_s\ge0\) and \(dK_s\) is supported on this boundary, the term \(\psi_\ell\,dK_s\) is then nonpositive. If the diagonal is active, the failed diagonal condition gives \(\psi_m+\kappa\varphi'(m')\le-\eta\) for the current maximum coordinate \(m'\) on the diagonal in the same neighborhood. Since \(dM_s\ge0\), \(dM_s\) is supported on the diagonal, and \(L_s=M_s\) there, the term \((\psi_m+\kappa\varphi'(L_s))\,dM_s\) is nonpositive. Thus the finite-variation contributions are nonpositive and the \(ds\)-contribution is bounded above by \(-\eta(\tau_h^{u_0}-t_0)\).

The DPP applied to the constant control \(u_0\) on \([t_0,\tau_h^{u_0}]\), together with \(v\le\psi\) and equality at \(z_0\), yields
\[
\begin{aligned}
&0\le
\EE\Bigl[
\int_{t_0}^{\tau_h^{u_0}} f(s,L_s,u_0)\dd s
+\kappa\int_{t_0}^{\tau_h^{u_0}}\varphi'(L_s)\dd M_s -\int_{t_0}^{\tau_h^{u_0}}\Pi_s(B_s-L_s)^+ds\\
&\hskip 95pt
+\psi(\tau_h^{u_0},L_{\tau_h^{u_0}},M_{\tau_h^{u_0}})
-\psi(t_0,\ell_0,m_0)
\Bigr].
\end{aligned}
\]
Using the preceding It\^o expansion, the right-hand side is at most
\(-\eta\EE[\tau_h^{u_0}-t_0]+o(h)\), which is negative for \(h\) small. This contradiction proves the subsolution condition.

We next prove the supersolution inequality. Suppose that \(\psi\) touches \(v\) from below at \(z_0\). If the relaxed supersolution condition failed, then all active operators would be strictly negative. Thus, after reducing \(\rho\), there is \(\eta>0\) such that for every \(u\in A\) the corresponding \(ds\)-integrand with \(f(t,\ell,u)\) is at least \(\eta\) on the stopped neighborhood. If the lower boundary is active, then \(\psi_\ell\ge\eta\) on the lower boundary, so \(\psi_\ell\,dK_s\ge0\) because \(dK_s\ge0\). If the diagonal is active, then \(\psi_m+\kappa\varphi'(m')\ge\eta\) for the current maximum coordinate \(m'\) on the diagonal in the stopped neighborhood, so \((\psi_m+\kappa\varphi'(L_s))\,dM_s\ge0\) because \(dM_s\ge0\) and \(L_s=M_s\) on the support of \(dM_s\).

The preceding exit-time estimate is uniform over all bounded controls, so it applies to any control selected below. Let \(\varepsilon_h=o(h)\). By the lower DPP inequality, choose an \(\varepsilon_h\)-optimal control \(\delta^h\) on \([t_0,\tau_h^{\delta^h}]\). Since \(v\ge\psi\) and equality holds at \(z_0\),
\[
\begin{aligned}
&\varepsilon_h\ge
\EE\Bigl[
\int_{t_0}^{\tau_h^{\delta^h}} f(s,L_s,\delta_s^h)\dd s
+\kappa\int_{t_0}^{\tau_h^{\delta^h}}\varphi'(L_s)\dd M_s-\int_{t_0}^{\tau_h^{\delta^h}}\Pi_s(B_s-L_s)^+ds\\
&\hskip 95pt
+\psi(\tau_h^{\delta^h},L_{\tau_h^{\delta^h}},M_{\tau_h^{\delta^h}})
-\psi(t_0,\ell_0,m_0)
\Bigr].
\end{aligned}
\]
Since the \(ds\)-integrand is at least \(\eta\) for every \(u\in A\) in the stopped neighborhood, and the finite-variation terms are nonnegative, the It\^o expansion implies that the right-hand side is at least
\(\eta\EE[\tau_h^{\delta^h}-t_0]+o(h)\). The estimate \(\EE[\tau_h^{\delta^h}-t_0]=h+o(h)\) holds uniformly over bounded controls, hence for the selected control \(\delta^h\) in particular. This contradicts \(\varepsilon_h=o(h)\) for \(h\) small. Hence the supersolution condition holds.

At the corner \((\ellbar,\ellbar)\), both finite-variation terms may be active. The preceding contradiction arguments apply with both boundary operators included simultaneously in the minimum for subsolutions and the maximum for supersolutions. Therefore \(v\) is a viscosity solution in the relaxed boundary sense.
\end{proof}

\subsection{Comparison Principle}
For the purpose of this subsection we transform the HJB system described above.
 
We consider the new unknown function \(\widehat v=v+\kappa\varphi(m)\) and the coordinate change \(x=\ell-\ellbar\), \(r=m-\ell\). The new interior operator becomes
\[
\begin{aligned}
&\widehat P(t,x,r,q,p_x,p_r,X)
=q-\lambda(\ellbar+x-\Theta_t)(p_x-p_r)
+\frac{\sigma^2}{2}(X_{xx}-2X_{xr}+X_{rr})\\
&\hskip 125pt
+H(t,\ellbar+x,p_x-p_r)-\Pi_t\bigl(B_t-(\ellbar+x)\bigr)^+.
\end{aligned}
\]
Here \(X\) denotes the Hessian matrix in \((x,r)\). Accordingly, using the same sign convention as above, we set \(\widehat G=-\widehat P\). In this way, the diagonal boundary condition is homogenized, the state space is mapped to \(\overline{\mathbb R_+^2}\), the transformed reflection matrix is completely-\(\mathcal S\), and the transformed interior operator has a continuous Hamiltonian part and a degenerate-parabolic second-order part.

We recall the terminology used in the oblique reflection literature. A square matrix \(R\) is called completely-\(\mathcal S\) if every principal submatrix \(R_I\) is an \(\mathcal S\)-matrix; equivalently, for every nonempty index set \(I\), there exists a vector \(q_I\) with strictly positive components such that \(R_Iq_I\) has strictly positive components.

\vskip 4pt
Indeed, the change \(\widehat v=v+\kappa\varphi(m)\) turns the diagonal condition \(v_m+\kappa\varphi'(m)=0\) into \(\widehat v_r=0\) after the coordinate change \(x=\ell-\ellbar\), \(r=m-\ell\). The same coordinate change maps \(D\) onto \(\overline{\mathbb R_+^2}\). The derivative relations \(v_\ell=\widehat v_x-\widehat v_r\), \(v_{\ell\ell}=\widehat v_{xx}-2\widehat v_{xr}+\widehat v_{rr}\), and \(v_m+\kappa\varphi'(m)=\widehat v_r\) give the transformed operator \(\widehat P\). Since \(A\) is compact and \(H\) is continuous and locally Lipschitz, the Hamiltonian part of \(\widehat P\) is continuous on compact sets. The second-order term is
\[
\frac{\sigma^2}{2}(\partial_x-\partial_r)^2,
\]
which corresponds to the positive semidefinite diffusion matrix \(\sigma^2(1,-1)^\top(1,-1)\); hence \(\widehat G=-\widehat P\) is degenerate elliptic in the viscosity sense. Finally, the reflection fields transform to the columns of
\[
R=\begin{pmatrix}1&0\\ -1&1\end{pmatrix}.
\]
The one-dimensional principal submatrices plainly satisfy the \(\mathcal S\) condition. For the full matrix, one may take \(q=(1,2)^\top\), which gives \(Rq=(1,1)^\top>0\). Therefore \(R\) is completely-\(\mathcal S\).

\begin{remark}[Relation with terminal running-maximum costs]
The transformation \(\widehat v=v+\kappa\varphi(m)\) also identifies the present CP formulation with a terminal running-maximum penalty. Indeed, by Proposition~\ref{pr:peak-reduction-v6}, minimizing the cost with the incremental Stieltjes term
\(\kappa\int_t^T\varphi'(L_s)\,dM_s\) is equivalent, up to the current-state constant \(-\kappa\varphi(m)\), to minimizing the problem with terminal cost \(\kappa\varphi(M_T)\). Thus \(\widehat v\) has terminal condition
\[
\widehat v(T,\ell,m)=\kappa\varphi(m)
\]
and homogeneous diagonal condition \(\widehat v_r=0\) in the transformed variables. This is the precise point of contact with the running-maximum terminal-cost literature, such as \cite{RMaxControl4,RMaxControl5}. The additional features here are the finite-horizon data-center running cost, the lower reflecting load constraint and its corner interaction with the running-maximum boundary, and the demand-response term. This answers the question raised above: the singular-measure term can be treated as a terminal running-maximum cost depending only on \(M_T\), up to the current-state constant \(-\kappa\varphi(m)\). This simplifies the estimates and the numerical implementation, while the Stieltjes formulation remains useful for explaining the economics of peak creation.
\end{remark}

\begin{theorem}[Comparison principle via parabolic oblique comparison]\label{thm:comparison-v5}
Let \(u\) be an upper semicontinuous viscosity subsolution and \(w\) a lower semicontinuous viscosity supersolution of the HJB system, with \(u(T,\cdot)\leq w(T,\cdot)\), polynomial growth compatible with \(\varphi\), and local uniform continuity on compact subsets. Then \(u\leq w\) on \([0,T]\times D\). Within this comparison framework, \(v\) is the unique viscosity solution in this class.
\end{theorem}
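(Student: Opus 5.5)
We transfer the problem to the orthant via the change of unknown and variables of the preceding subsection, and run a doubling-of-variables argument adapted to oblique reflection. First, set \(\widehat u=u+\kappa\varphi(m)\) and \(\widehat w=w+\kappa\varphi(m)\), written in \((t,x,r)\in[0,T]\times\overline{\mathbb R_+^2}\). These are respectively a sub- and a supersolution of the transformed system \(\widehat G=0\) with boundary operators \(-(p_x-p_r)\) on \(\{x=0\}\) and \(-p_r\) on \(\{r=0\}\). This is because the relation \(v_\ell=\widehat v_x-\widehat v_r\), \(v_m+\kappa\varphi'(m)=\widehat v_r\) acts on test functions exactly as on smooth functions, and \(\varphi\in C^2\) keeps \(C^{1,2,1}\) test functions in the correct class. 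Next, we remove the polynomial growth. Replace \(\widehat u\) by \(\widehat u_\theta=\widehat u-\theta e^{-\gamma t}(1+x^2+r^2)^{q'/2}\), with \(q'>q\) and \(\gamma\) large, and add \(\theta(T-t)\)-type terms. The aim is that \(\widehat u_\theta\) is a strict subsolution in the interior, for which we use linear growth of the drift, the boundedness of \(A\), and the local Lipschitz bound on \(H\). It should also satisfy the boundary inequalities on the faces, since the penalty has derivative of the favourable sign in the directions \(R e_1=(1,-1)\) and \(R e_2=(0,1)\), up to a small correction that we absorb. The strict version of the subsolution property also makes \(\widehat u_\theta-\widehat w\) attain its maximum on a compact set. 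It suffices to prove \(\widehat u_\theta\le\widehat w\) and then let \(\theta\downarrow0\).

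Assume by contradiction that \(\max(\widehat u_\theta-\widehat w)=\mu>0\), attained at some \(t_0<T\) (the terminal inequality excludes \(t_0=T\)). We double variables with the penalization
\[
\Phi_\varepsilon(t,z,s,z')=\widehat u_\theta(t,z)-\widehat w(s,z')-\frac{|z-z'|^2}{2\varepsilon}-\frac{|t-s|^2}{2\varepsilon}-g_\varepsilon(z,z'),
\]
where \(g_\varepsilon\) is the oblique correction term. Following Dupuis--Ishii for orthants with completely-\(\mathcal S\) reflection, we use a \(C^2\) test function \(\Psi(z-z')/\varepsilon\) with \(\Psi\) quadratic-like and satisfying \(\langle D\Psi(\xi),Re_i\rangle\ge0\) when \(\xi_i\ge0\) and \(\langle D\Psi(\xi),Re_i\rangle\le0\) when \(\xi_i\le0\). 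The existence of such a \(\Psi\) is guaranteed by the completely-\(\mathcal S\) property with \(q=(1,2)^\top\) checked earlier. This construction will be the main obstacle. The corner \((0,0)\) requires the simultaneous compatibility of both reflection directions, and the degenerate diffusion direction \((1,-1)\) coincides with \(Re_1\). We will first try Dupuis--Ishii's explicit construction of a smooth convex norm-like \(\Psi\) adapted to \(R\), exactly as in \cite{RMaxControl4,RMaxControl5}, checking by hand that the resulting Hessian terms remain controlled.

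With this penalization, at maximum points \((t_\varepsilon,z_\varepsilon,s_\varepsilon,z'_\varepsilon)\) the standard estimates give \(|z_\varepsilon-z'_\varepsilon|^2/\varepsilon\to0\), using the local uniform continuity hypothesis. The sign properties of \(D\Psi\) then ensure that whenever \(z_\varepsilon\) lies on a face the boundary operator has the wrong sign, so the relaxed min/max conditions force the interior inequality \(\widehat G\le0\) for \(\widehat u_\theta\) and \(\widehat G\ge0\) for \(\widehat w\). The same holds at the corner. We then apply the parabolic Crandall--Ishii lemma to obtain jets \((q,p,X)\) and \((q',p',Y)\) satisfying the usual matrix inequality. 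Because the diffusion matrix is \(\sigma^2 vv^\top\) with constant \(v\), we get \(\operatorname{tr}(\sigma^2vv^\top(X-Y))\le C|z-z'|^2/\varepsilon\to0\). The drift and Hamiltonian differences are bounded by \(C(1+|z|)|p_x-p_r||z-z'|+C|z-z'|\), which also tends to \(0\). Subtracting the two inequalities and using strictness gives a positive constant \(\le o(1)\), a contradiction. Hence \(u\le w\). Since \(v\) is a viscosity solution, is continuous, and has polynomial growth (Lemma~\ref{le:polynomial-growth-v6}), it lies in the class, which yields uniqueness.
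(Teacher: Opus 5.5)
Your route is the one the paper sketches: homogenize the diagonal condition, pass to the orthant, use a Dupuis--Ishii test function with parabolic Crandall--Ishii doubling, and localize the polynomial growth. But the step ``the sign properties of \(D\Psi\) force the interior inequality'' fails as written. Your two conditions imply \(\langle D\Psi(\xi),Re_i\rangle=0\) whenever \(\xi_i=0\); for a Dupuis--Ishii type \(\Psi\) this even holds on a whole cone around \(\{\xi_i=0\}\). In particular it happens when both doubled points lie on the same face, which you cannot exclude if the maximum of \(\widehat u_\theta-\widehat w\) sits on a face or at the corner. There the boundary operators vanish on the penalty gradients. So \(\min\{\widehat G,B_i\}\le0\) and \(\max\{\widehat G,B_i\}\ge0\) hold trivially and say nothing about \(\widehat G\).

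Your \(\theta\)-modification does not cure this. We have \(\partial_r(1+x^2+r^2)^{q'/2}=0\) on \(\{r=0\}\) and \((\partial_x-\partial_r)(1+x^2+r^2)^{q'/2}=0\) at the corner. Moreover \(\widehat w\) is not modified at all, although its boundary alternative must be excluded too. The missing ingredient is a strict boundary perturbation on both sides: add \(\delta\,(f(z)+f(z'))\) to \(\Phi_\varepsilon\), i.e.\ compare \(\widehat u_\theta+\delta f\) with \(\widehat w-\delta f\), where \(f=2x+r=\ell+m-2\ellbar\). This \(f\) satisfies \(\langle Df,Re_i\rangle=1\) for \(i=1,2\), equivalently \(R^\top(2,1)^\top=(1,1)^\top\). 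Note that this is a condition on \(R^\top\), not the completely-\(\mathcal S\) vector \(q\) with \(Rq>0\). At the doubled maximum the subsolution's boundary operator is then \(\ge\delta\) and the supersolution's is \(\le-\delta\), so both boundary alternatives are excluded. The interior error \(C\delta(1+|x|)\) and the terminal defect \(2\delta f\) are bounded on the compact set selected by the \(\theta\)-penalty, so they are absorbed for \(\delta\) small.

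Relatedly, completely-\(\mathcal S\) does not by itself yield a quadratic-like \(\Psi\) with your sign properties. Take the completely-\(\mathcal S\) matrix \(\begin{pmatrix}1&2\\2&1\end{pmatrix}\). Moving from \((0,1)\) along \(-Re_2\) to \((-2,0)\), and then along \(Re_1\) to \((0,4)\), cannot increase \(\Psi\). Iterating gives \(\Psi(0,4^k)\le\Psi(0,1)\), contradicting quadratic growth. For the present \(R\) you must check the Dupuis--Ishii condition directly. It holds, e.g., with \(\Psi\) a \(C^2\) regularization of \(\max\bigl(|\xi_1|,\tfrac12|\xi_1+\xi_2|\bigr)^2\): its unit ball \(\{|\xi_1|\le1,\ |\xi_1+\xi_2|\le2\}\) crosses \(\{\xi_1=0\}\) inside edges parallel to \(Re_1\), and \(\{\xi_2=0\}\) inside edges parallel to \(Re_2\).

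The degeneracy you identified as the main obstacle is harmless. Since \(\Psi\) depends only on \(z-z'\) and \(\sigma\) is constant, the Crandall--Ishii matrix inequality gives \(\langle (X-Y)v,v\rangle\le0\) directly.

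Finally, doubling time with the same \(\varepsilon\) leaves the term \(\lambda(\Theta_s-\Theta_t)(p_x-p_r)\). It is absent from your bound and need not be small for merely continuous \(\Theta\). Use a separate time parameter sent to zero first, as the paper does with \(\eta\).
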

The proof uses the cited oblique-boundary comparison machinery after the structural transformation below.

\begin{proof}
Set \(\widehat u=u+\kappa\varphi(m)\) and \(\widehat w=w+\kappa\varphi(m)\). This common change preserves the ordering at \(T\), the semicontinuity, the growth class, and local uniform continuity. Next use the coordinate change
\[
x=\ell-\ellbar,\qquad r=m-\ell,
\]
and denote by \(\widetilde u,\widetilde w\) the corresponding functions on the orthant. The change by \(\kappa\varphi(m)\) and the coordinate change preserve the viscosity inequalities in the usual test-function sense. By the structural transformation described above, the transformed problem is an oblique derivative problem on \(\overline{\mathbb R_+^2}\) with completely-\(\mathcal S\) reflection matrix, continuous Hamiltonian part, and degenerate elliptic operator \(\widehat G=-\widehat P\).

We now verify that the transformed problem falls under the parabolic oblique-derivative comparison theorem obtained by combining the Dupuis--Ishii corner construction for completely-\(\mathcal S\) reflection fields \cite{DupuisIshii1991} with the standard parabolic doubling-of-variables argument of \cite{CrandallIshiiLions1992}. The reflection matrix is completely-\(\mathcal S\) by the verification above; this is the corner compatibility condition used in the oblique-boundary construction. The operator \(\widehat G=-\widehat P\) is continuous in the jet variables on compact subsets and degenerate elliptic because the diffusion matrix \(\sigma^2(1,-1)^\top(1,-1)\) is positive semidefinite. To impose strict properness, use the standard exponential change \(\widetilde u_\gamma(t,x,r)=e^{-\gamma t}\widetilde u(t,x,r)\), \(\widetilde w_\gamma=e^{-\gamma t}\widetilde w\), with \(\gamma>0\). In the transformed variables the oblique boundary operators are homogeneous first-order operators, so this positive multiplication preserves the boundary inequalities; in the interior it adds the usual strictly monotone zero-order term. The change is removed by letting \(\gamma\downarrow0\) at the end.

For completeness, we recall how the parabolic and unbounded-domain reductions enter the argument. Suppose, to get a contradiction, that \(\sup(\widetilde u-\widetilde w)>0\). Because both functions have polynomial growth, introduce a coercive localization penalty, for \(N\) larger than the common growth order,
\[
\rho\bigl(1+x^2+r^2\bigr)^{N/2}
+\rho\bigl(1+y^2+s^2\bigr)^{N/2}
\]
and the usual doubling penalty
\[
\frac{|t-q|^2}{2\eta}
+\frac{|x-y|^2+|r-s|^2}{2\varepsilon}.
\]
For fixed \(\rho>0\), the maximum of the penalized difference is attained on a compact subset of \([0,T]\times\overline{\mathbb R_+^2}\), and the growth penalty lets one pass to the limit \(\rho\downarrow0\) after the comparison estimate. The time-doubling parameter \(\eta\) gives the parabolic jets in the sense of the user's guide \cite{CrandallIshiiLions1992}. At boundary points, the oblique test-function correction supplied by the completely-\(\mathcal S\) matrix in the Dupuis--Ishii construction makes the viscosity inequalities compatible with the two reflected faces. At the corner, the relaxed min--max convention used in Section~\ref{sec:visc-form} is precisely the viscosity statement that all active oblique boundary inequalities are included simultaneously. Sending first \(\varepsilon,\eta\downarrow0\), and then \(\rho,\gamma\downarrow0\), gives the contradiction to a positive maximum. Therefore \(\widetilde u\le \widetilde w\) on \([0,T]\times\overline{\mathbb R_+^2}\).

Reversing the coordinate and peak-penalty changes gives \(u\le w\) on \([0,T]\times D\). Thus the comparison conclusion relies on the cited oblique-boundary comparison machinery; the verification above records the structural hypotheses needed in the present problem, namely the completely-\(\mathcal S\) reflection matrix, degenerate ellipticity, properness after exponential change, and polynomial-growth localization. Lemma \ref{le:polynomial-growth-v6}, Proposition \ref{pr:spatial-lip-v5}, and Proposition \ref{pr:time-holder-v6} show that \(v\) belongs to the stated growth and continuity class. Applying the comparison result to two viscosity solutions in this class gives uniqueness.
\end{proof}

\section{Numerical Implications}

The viscosity formulation suggests the following structure for numerical schemes. In the interior, one discretizes the HJB operator \(P=-G\) with a monotone finite-difference approximation of the drift, diffusion, and Hamiltonian \(H\). At the lower reflecting boundary \(\ell=\ellbar\), the Neumann condition is imposed by a monotone one-sided approximation of \(v_\ell=0\). At the running-maximum diagonal \(\ell=m\), the diagonal condition is imposed as
\[
v_m+\kappa\varphi'(m)=0,
\]
again using a one-sided difference in the \(m\)-direction compatible with the wedge \(D\).
Equivalently, one can solve for the transformed value \(\widehat v=v+\kappa\varphi(m)\), impose the homogeneous diagonal condition \(\widehat v_r=0\), and then recover \(v=\widehat v-\kappa\varphi(m)\). This is often the cleaner implementation because the peak penalty is carried by the terminal condition \(\widehat v(T,\ell,m)=\kappa\varphi(m)\).

Using the comparison theorem above, a Barles--Souganidis type convergence result \cite{BarlesSouganidis1991} applies to any scheme which is monotone, stable, and consistent with the relaxed boundary formulation. Identifying such a scheme and proving these properties is not investigated in this paper; here, we merely identify the PDE and boundary conditions that such a scheme must approximate.


\subsection{Preliminary numerical experiments}
\label{sec:numerical-experiments-draft}

This section reports preliminary numerical experiments for the finite-horizon
single data center control problem.  The purpose is qualitative: we want to visualize
approximations of the value function and candidate feedback behavior. We compare three computational
routes suggested by the dynamic programming and HJB formulations.  We do not
claim a convergence theorem for the numerical methods in this section.

For the purpose of our implementation of the numerics, we assume that the time horizon is $T=1$ and that the mean-reverting load function $t\mapsto \Theta(t)$ is 
$$
\Theta(t)=\theta_s\bigl(1+0.1 \sin\bigl(2\pi t)\bigr),\qquad0\le t\le 1,
$$
for a scale constant $\theta_s=0.9$. Our qualitative results do not differ much from a constant function $\Theta(t)=0.9$. Our choice is consistent with what is found in the data-center literature and power-system planning reports. See for example \cite{DataCenterDR1, DataCenterDR2} describing standalone data centers as high-load-factor facilities with relatively flat and predictable electricity demand. Daily variation may still occur, but it is typically driven by workload scheduling, mixed-use occupancy, cooling demand, or AI workload bursts, rather than by the same deterministic shape as residential or regional aggregate load. We use a constant volatility $\sigma=0.15$ and $\underline\ell=0$ for the sake of definiteness. Negative values of $\underline\ell$ would make sense if the load was an effective load, for example the actual electricity demand of the data center corrected for the power it produces with its own generation, say solar panels. 
For the parameters of the running cost \eqref{fo:short_cost} we use $a=\alpha_0=0$ and $b=\beta=1$. We use the terminal-cost representation of the demand-charge term,
\[
  \kappa\int_t^T \varphi'(L_s)\,dM_s
  =
  \kappa\{\varphi(M_T)-\varphi(m)\},
\]
which allows us to avoid having to compute the integral with respect to \(dM\) by time discretization. We concentrate our efforts on two special cases for the peak penalty, namely
\[
 a)\quad \varphi(m)=\frac12m^2,\qquad\qquad \text{and}\qquad\qquad b)\quad \varphi(m)=m.
\]
We also assume that the baseline function $t\mapsto B_t$ is deterministic and constant, say equal to $B$. Finally, we assume that the function $\Pi_t$ is a multiple $\Pi\ge 0$ of a smoothed version of the indicator function of an interval, say $[0.25,0.65]$, delimiting a time window during which the Demand Response program is in effect.

\vskip 4pt
In order to visualize the results, we plot approximations of the value function and candidate controls for a fixed value \(m=m_0\) so that the value and feedback can
be plotted over the \((t,\ell)\)-plane.
We shall often refer to the benchmark case when the peak penalty is quadratic and there is no demand response reward, i.e. $\Pi=0$.

\subsection*{Incremental Dynamic Programming.}
The first algorithm uses a semi-Lagrangian scheme intended to approximate the viscosity solution of \eqref{fo:HJB} with the boundary conditions \eqref{fo:HJB_BC1} and \eqref{fo:HJB_BC2}.
We rely on a semi-Lagrangian DPP discretization.  For
nonlinear \(\varphi\), we use an incremental recursion for \(v\), with terminal
condition \(v(T,\ell,m)=0\) and one-step peak cost
\[
  \kappa\{\varphi(M_{n+1})-\varphi(M_n)\}.
\]
This formulation is numerically preferable to repeatedly interpolating the
convex terminal peak term in the transformed \(\widehat v=v+\kappa\varphi(m)\)
problem.  

In the quadratic benchmark, the current grid gives
\[
  v(0,\ell_0,m_0)\simeq 0.05637,
  \qquad
  \delta(0,\ell_0,m_0)\simeq -0.122.
\]
The discrete dynamic-programming recursion is internally consistent when its
stored feedback is evaluated with the same quadrature, interpolation, and
boundary projection conventions. 

For the cross-route quadratic DPP benchmark, the numerical parameters were
\[
T=1,\quad (\ell_0,m_0)=(0.65,0.80),\quad
\lambda=1.20,\quad \sigma=0.22,\quad
\Theta(t)=0.70+0.15\sin(2\pi t),
\]
with \(a=0.35\), \(b=0.30\), \(\alpha_0=0.70\),
\(\beta=0.60\), \(\kappa=2.00\), \(\Pi=0\), and
\(\delta\in[-0.80,0.80]\).  The incremental-DPP computation used
\(x=\ell-\underline\ell\) and \(r=m-\ell\) on
\([0,2.0]\times[0,2.0]\), with \(51\) grid points in each space
direction, \(56\) time steps, and \(51\) controls.  The one-step expectation
was approximated by a \(5\)-point Gaussian quadrature for the Euler endpoint
and a \(3\)-point Gauss--Legendre quadrature for the Brownian-bridge maximum,
with bilinear interpolation in \((x,r)\).  The lower reflecting boundary was
handled by the projected Euler update; no additional finite-difference
boundary projection was imposed in this particular incremental-DPP run.
With these conventions, the current grid gives
\[
  v(0,\ell_0,m_0)\simeq 0.05637,
  \qquad
  \delta(0,\ell_0,m_0)\simeq -0.122.
\]
The discrete dynamic-programming recursion is internally consistent when its
stored feedback is evaluated with the same quadrature and interpolation
kernel: the same-kernel policy evaluation gives the same initial value up to
machine precision.  An independent forward Monte Carlo evaluation of the same
stored feedback gives \(0.04120\), with standard error \(2.38\times10^{-4}\),
so the gap is approximately \(-1.52\times10^{-2}\).  This gap compares two
different numerical objects, namely the semi-Lagrangian quadrature/interpolation
kernel used by the DPP and a separate continuous-path Euler Monte Carlo
evaluation.  Thus the discrepancy has been quantified, but these numerical
values should not be presented as validation of the DPP scheme unless a common
evaluation protocol is used or the gap is further resolved.

As a pilot refinement check, we also ran a boundary-projected DPP variant on
\([0,1.8]^2\).  In the quadratic case, coupled grid-time-control refinements
gave
\[
\begin{array}{c|ccc}
(N_x,N_t,N_u) & (41,48,41) & (51,60,51) & (61,72,61)\\
\hline
v_0 & 0.08740 & 0.07773 & 0.07492
\end{array}
\]
respectively.  Matched-mesh domain enlargements from \(x_{\max}=r_{\max}=1.80\)
to \(2.16\) and \(2.52\), holding \(\Delta x=\Delta r=0.036\), left
\(v_0\) unchanged at the reported precision.  Comparing the level-2
unprojected and projected boundary treatments gave \(v_0=0.06846\) and
\(v_0=0.07492\), respectively.  These checks are reported only as pilot
reproducibility diagnostics; they are not used here as a proof of convergence
of the numerical scheme.

\subsection*{Deep Galerkin Value Approximation.}
The second method approximates the value function by a smooth neural network.
The parameters of the neural network are learned by minimizing, via Stochastic Gradient Descent (SGD), a loss function comprising the squared HJB residual, the terminal residual, and the
lower-boundary and diagonal-boundary residuals.  Because the network is
differentiable, it also gives a smooth feedback proxy through the minimizer of
the Hamiltonian.

Pure residual training fits the terminal and boundary conditions well, but in
the current experiments the derivative feedback is too weak near the initial
state.  We therefore also use an anchored Deep Galerkin model.  This model keeps
the PDE, terminal, and boundary residual losses, but adds value and policy
samples from the incremental DPP benchmark, with oversampling on the fixed
\(m=m_0\) surface and near the initial state.  It should be viewed as a smooth
visualization proxy, not as an independent solver.

In the quadratic benchmark, the anchored model gives
\[
  v_{\rm DGM}(0,\ell_0,m_0)\simeq 0.06458,
  \qquad
  \delta_{\rm DGM}(0,\ell_0,m_0)\simeq -0.183.
\]
On the interior fixed-\(m\) surface, its policy \(L^2\)-distance to the
incremental-DPP feedback is about \(0.03041\), compared with \(0.11808\) for the
residual-only DGM.  This makes it the most useful smooth value-function and
feedback proxy among the current neural value-function experiments.

\subsection*{Direct SGD Optimization}

The third method bypasses the characterization of the value function as the solution of a PDE. 
It attempts to solve directly the optimization problem over an approximate class of feedback controls.
We parameterize the feedback control directly
as a neural network \(\delta_\theta(t,L_t,M_t)\).  For each value of the network
parameters, we replace the objective function \eqref{fo:full_cost} by a Monte Carlo approximation 
computed from sample paths simulated by a projected Euler scheme at the lower boundary,
and we use the terminal peak cost
\(\kappa\{\varphi(M_T)-\varphi(m)\}\). We then use Stochastic Gradient Descent (SGD) to minimize this Monte Carlo approximation of the objective.

Since the proxy of the value function we plot is obtained by evaluating expected cost associated to the learned control by Monte Carlo, it yields an upper bound for the exact value function, the error coming from the approximation of the optimal control compounded by the Monte Carlo evaluation of the expected cost. Moreover, the surface plots of the proxies of the value functions suffer from the fact that they are computed for $m=m_0$ fixed which requires a special interpretation of the regime $\ell\ge m_0$. This is discussed in the comments of the figures below.

\vskip 6pt
The algorithm proceeds as follows.

\begin{itemize}
\item Choose a time horizon, say $T=1$ and a number of time steps, say $N_t=100$. The time subdivision will be given by $t_i=i \Delta_t$ for $i=0,1,\cdots,N_t$ where we set $\Delta_t=T/N_t$.
\item Choose a number of Monte Carlo samples, say $N_{sim}$, and generate a table $\epsilon^j_{i}$ of independent $N(0,1)$ random samples for $i=0,1,\cdots,N_t-1$ and $j=1,\cdots,N_{sim}$.
\item Choose an architecture for a deep neural network \(\delta_\theta(t,\ell,m)\) with the time variable restricted to the chosen time grid.
\item For each neural network parameter $\theta\in\Theta$
\begin{itemize}
\item For each simulation $j=1,\cdots,N_{sim}$, generate an admissible load path inductively for $i=0,1,\ldots,N_t-1$, starting from $\ell^j_{0}=\ell$ and \(m_0^j=m\), using the formula
$$
\widetilde\ell^j_{i+1}= \ell^j_i+\Delta_t\Bigl[- \lambda(\ell^j_i-\Theta(t_i))+\delta_\theta(t_i,\ell^j_i, m^j_i) \Bigr]+ \sigma \sqrt{\Delta_t} \epsilon^j_i,\qquad
\ell^j_{i+1}=\max\{\underline\ell,\widetilde\ell^j_{i+1}\}.
$$
where $m^j_{i+1}=\max\{m^j_i,\ell^j_{i+1}\}$, so that the running maximum starts from the prescribed initial value \(m\).
\item Compute the objective \eqref{fo:full_cost} for this sample path
$$
J_\theta^j= \Delta_t\sum_{i=0}^{N_t-1}\Bigl[
f(t_i,\ell^j_i,\delta_\theta(t_i,\ell^j_i,m^j_i))
-\Pi_{t_i}\bigl(B_{t_i}-\ell^j_i\bigr)^+\Bigr]
+\kappa \bigl[\varphi(m^j_{N_t})-\varphi(m)\bigr],
$$
where the running cost $f(t,\ell,\delta)$ is defined in \eqref{fo:short_cost}.
\end{itemize}
\item Compute the average $J_\theta$ of the resulting $N_{sim}$ objectives.
\item Run SGD to find a learned parameter $\hat\theta\in\Theta$ with a low empirical objective value $J_\theta$.
\item For selected values $m_0$, produce a surface plot of the learned control $(t,\ell)\mapsto \delta_{\hat\theta}(t,\ell,m_0)$.
\item For the selected values of $m_0$, produce a surface plot of the Monte Carlo policy-cost estimate $J_{\hat\theta}$.
\end{itemize}

\subsection*{Numerical Experiments}
We reproduce below some results of the implementation of the third method described above.
Unless overridden in the figure captions, the basic parameters used in these runs are summarized in Table \ref{ta:parameters}.

\begin{center}
\begin{table}[ht]
\centering
\small
\setlength{\tabcolsep}{4pt}
\caption{Numerical parameters used by default}
\label{ta:parameters}
\begin{tabular}{clrclr}
\toprule

Symbol & Description & Value&Symbol & Description & Value\\

\midrule

$T$ & Time horizon & 1.0&$\sigma$ & volatility & 0.15\\
$\lambda$ & Mean reversion coefficient & 1.0&$m_0$ & Initial running maximum & 1.0\\
$\underline\ell$ & Load lower bound & 0.0&$\bar\ell$ & Computational grid upper limit & 2.0\\
$\theta_s$ & $\Theta(t)$ scale factor & 0.9&$\beta$ & Control penalty coefficient & 1.0\\
$\underline\delta$ & Control lower bound & -1.0&$\bar\delta$ & Control upper bound & 1.0\\
$a$ & Running cost coefficient & 0.0&$b$ & Running cost coefficient & 1.0\\
$\kappa$ & Peak-charge penalty coefficient & 1.0&$\varphi$ & Peak-charge penalty type & "quadratic"\\
$\Pi$ & DR incentive rate& 1.0&$B$ & DR baseline load level & 0.95\\
\bottomrule

\end{tabular}
\end{table}
\end{center}

For the direct SGD optimization we use a \emph{small} neural network with $3$ features, input $x=(t,\ell,m)$, $n_\ell=1$ hidden layer, $n_u=8$ units in the layer, so $41$ learned parameters with $\tanh$ activation function. As given in the text above, we used $N_t=100$ time-discretization steps, $N_{sim}=256$ Monte Carlo samples for training and $N_{sim}=4096$ for final evaluation.

\subsection*{Numerical Results}

The plots and comments below use the neural architecture and the parameters given above, except where the captions specify overrides. For clarity, define
\[
V^{\hat\theta}(t,\ell,m):=J(t,\ell,m;\delta_{\hat\theta}),
\]
the expected cost of the learned policy. In general, $V^{\hat\theta}$ is a policy-value estimate and is not assumed to equal the optimal value function $v$. Indeed, for $0\le \ell\le m_0$, the displayed surfaces show $(t,\ell)\mapsto\delta_{\hat\theta}(t,\ell,m_0)$ and $(t,\ell)\mapsto V^{\hat\theta}(t,\ell,m_0)$. For $m_0\le \ell\le\bar\ell$, they instead show diagonal states, namely $(t,\ell)\mapsto\delta_{\hat\theta}(t,\ell,\ell)$ and $(t,\ell)\mapsto V^{\hat\theta}(t,\ell,\ell)$. Thus the full plotted surface is not a fixed-$m$ slice.

Since the original value \(v\) is written with the CP term in incremental form and there is no additional terminal payoff, the terminal condition is \(v(T,\ell,m)=0\). This is quite clear from the plots.

\begin{figure}[H]
  \centering
  \includegraphics[width=0.9\textwidth]{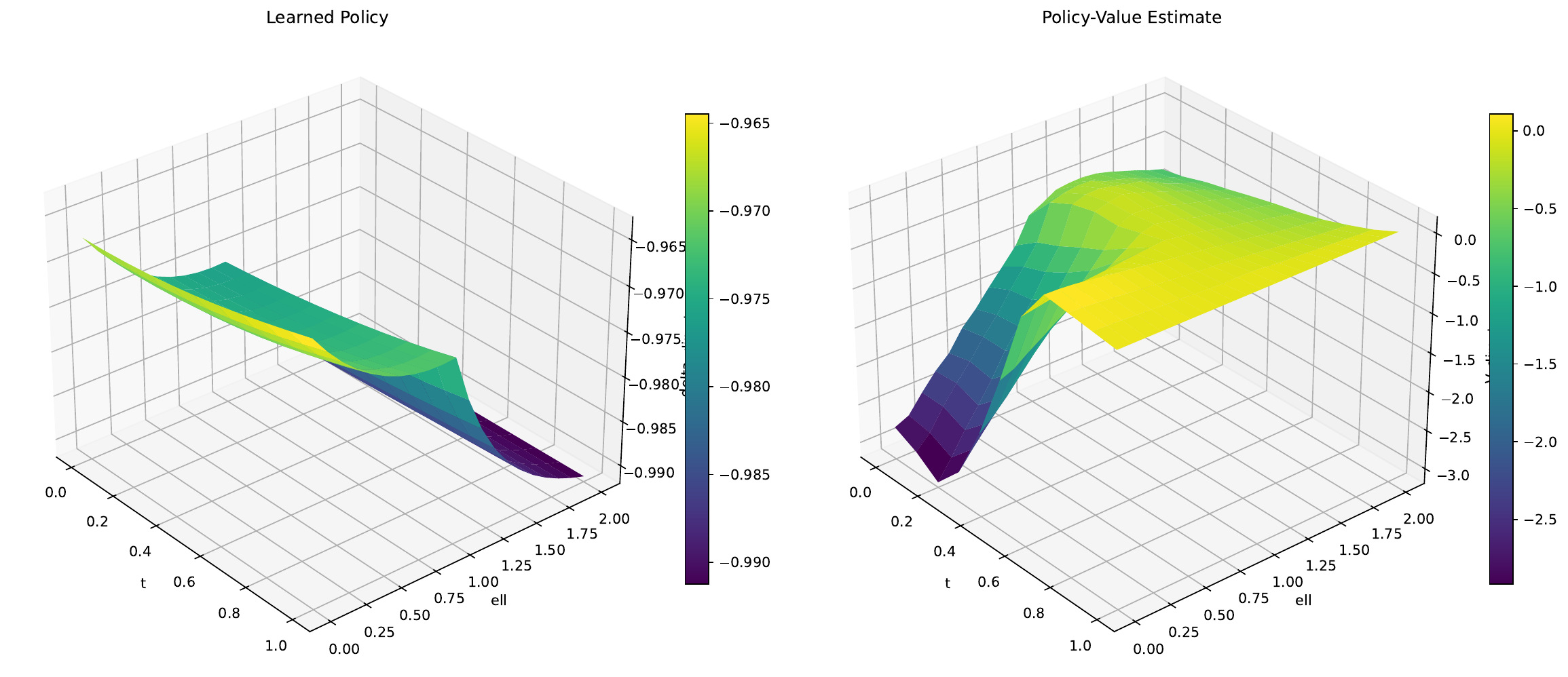}
  \caption{Learned feedback (left) and Monte Carlo policy-value estimate (right) computed with the parameters in Table \ref{ta:parameters} with the overrides $\kappa=10.0$, $m_0=1.0$ and $\Pi=10.0$.}
  \label{fig:kappa10.0m_0=1.0Pi=10.0}
\end{figure}

The left pane of Figure \ref{fig:kappa10.0m_0=1.0Pi=10.0} shows clearly the two regimes ($\ell\le m_0$ and $\ell\ge m_0$) one should expect given the explanations spelled out at the top of this subsection. 
This is an intentionally extreme DR case: the learned feedback is close to the lower control bound over most of the plotted region, which indicates that the DR reward dominates the reduced running cost and the CP penalty for these parameter values.
The estimated policy value is negative, a negative cost meaning an expected profit within this reduced-form objective.

\begin{figure}[H]
  \centering
  \includegraphics[width=0.9\textwidth]{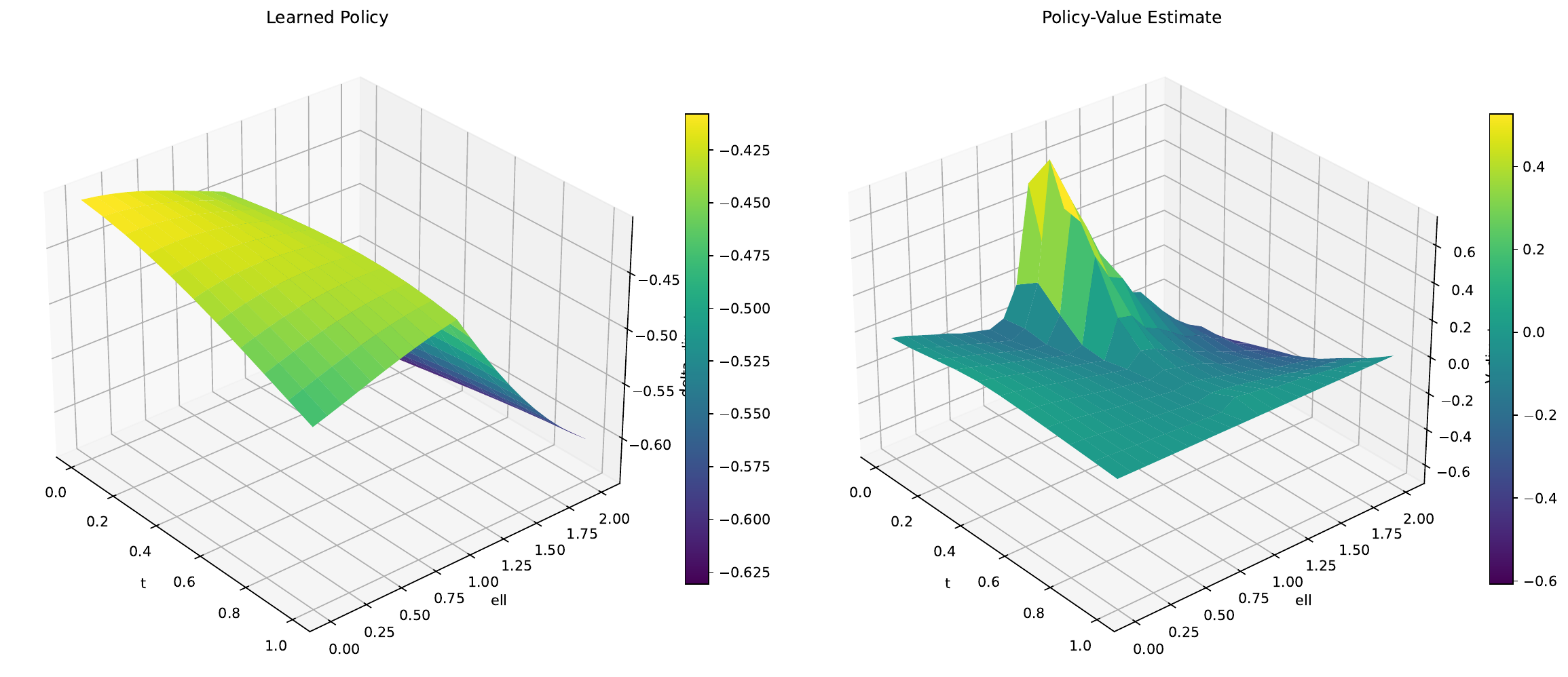}
  \caption{Learned feedback (left) and Monte Carlo policy-value estimate (right) computed with the parameters in Table \ref{ta:parameters} with the overrides $\kappa=10.0$, $m_0=1.0$ and $\Pi=0.0$.}
  \label{fig:kappa10.0m_0=1.0Pi=0.0}
\end{figure}

Figure \ref{fig:kappa10.0m_0=1.0Pi=0.0} provides plots for the same set of parameters in the absence of participation in the Demand Response (DR) program, namely after setting $\Pi=0$. The difference is striking. While the left pane still shows the sharp transition when $\ell$ crosses the threshold $m_0$, the right pane shows a large swath of values of $t$ and $\ell$ for which the estimated policy value is positive, meaning costs for the data center within this reduced-form objective. When the current maximum is already high, reducing the load does not undo the previously accumulated peak term; it only lowers the future running cost and reduces the probability of creating a still higher peak. This explains why the sign and magnitude of the feedback depend strongly on the position of \(\ell\) relative to the current maximum.

\vskip 4pt
Given that we did not model the major sources of revenue of the data center, the form of the cost is such that the main source of gain is the DR program. For that reason, the learned control $\hat\delta$ tends to keep the load as low as possible to increase this source of gain, and this is why $\hat\delta$ is negative in all the plots reproduced above. Figure \ref{fig:a=-10.0} below shows that changing dramatically the nature of the running cost $f(t,\ell,\delta)$ can produce regimes in which the learned control $\hat\delta$ is positive.

\begin{figure}[H]
  \centering
  \includegraphics[width=0.9\textwidth]{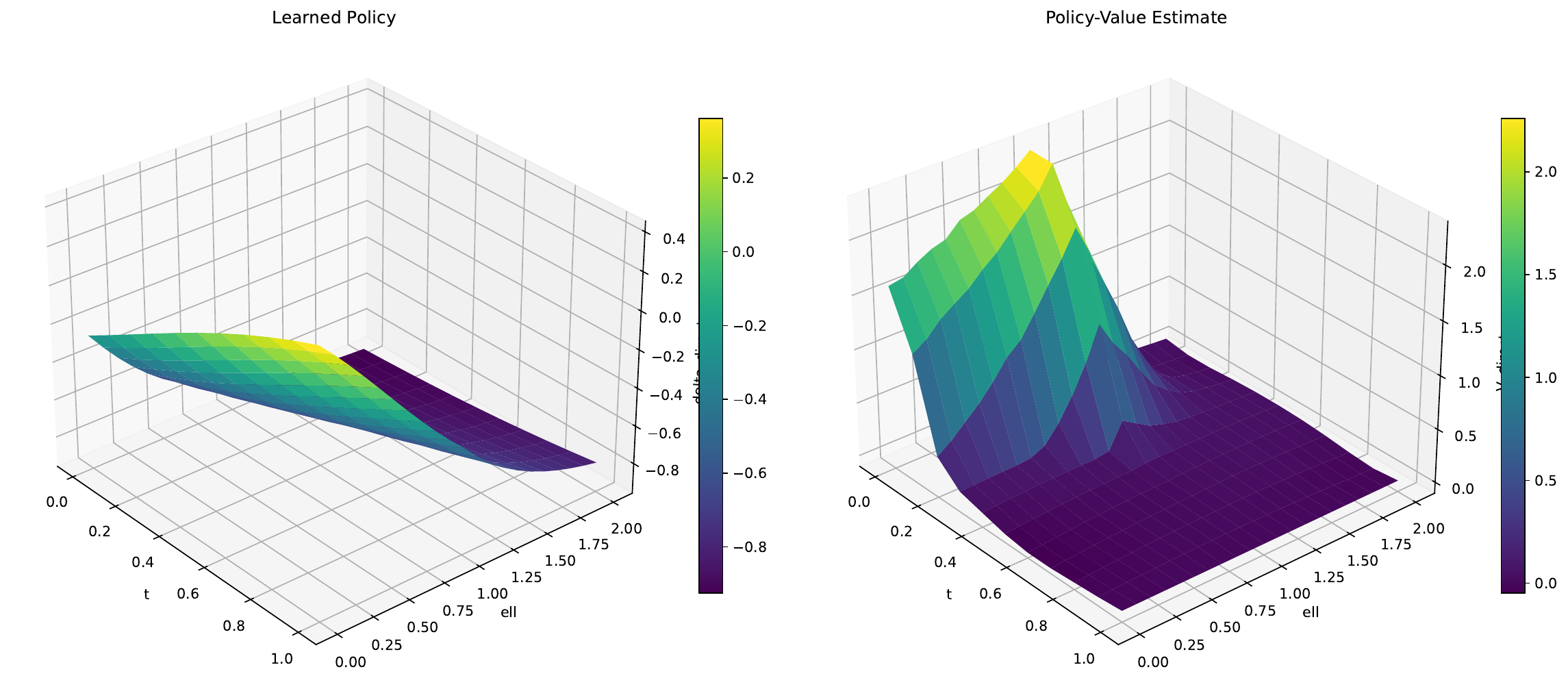}
  \caption{Learned feedback (left) and Monte Carlo policy-value estimate (right) computed with the parameters in Table \ref{ta:parameters} with the major override $a=-1.0$.}
  \label{fig:a=-10.0}
\end{figure}

\paragraph{Acknowledgements:}
The first named author (RC) was partially supported by the AFOSR grant FA-9550-23-1-0324.

\appendix
\section{Local Verification Argument}

For a smooth candidate \(\widehat V\) and an admissible control \(u\), It\^o's formula gives
\[
\dd\widehat V=\left(\widehat V_t+[-\lambda(L_s-\Theta_s)+u_s]\widehat V_\ell+
\frac{\sigma^2}{2}\widehat V_{\ell\ell}\right)\dd s
+\sigma\widehat V_\ell\dd W_s+\widehat V_\ell\dd K_s+\widehat V_m\dd M_s.
\]
Equivalently, applying It\^o's formula to \(\widehat V+Y\), with \(Y\) as in \eqref{fo:augmented}, gives the \(ds\)-coefficient
\[
\widehat V_t+[-\lambda(\ell-\Theta_t)+u]\widehat V_\ell+
\frac{\sigma^2}{2}\widehat V_{\ell\ell}+f(t,\ell,u)
-\Pi_t(B_t-\ell)^+,
\]
and the finite-variation coefficients \(\widehat V_\ell\) in front of \(dK_s\) and \(\widehat V_m+\kappa\varphi'(L_s)\) in front of \(dM_s\). The condition $\widehat V_\ell(t,\ellbar,m)=0$ kills the $\dd K_s$ term. Since \(dM_s\) is supported on $\{L_s=M_s\}$, the term \((\widehat V_m+\kappa\varphi'(L_s))\dd M_s\) gives
\[
\widehat V_m(t,m,m)+\kappa\varphi'(m)=0.
\]

\end{document}